\newtheorem{lemma}[subsection]{Lemma}
\newtheorem{theorem}[subsection]{Theorem}
\newtheorem{corollary}[subsection]{Corollary}
\newtheorem{fact}[subsection]{Fact}
   {\refstepcounter{subsection}%
        \medbreak\noindent{\bf Question \thesubsection\space}}%
   {\par\medbreak}%
\newenvironment{remark}%
   {\refstepcounter{subsection}%
        \medbreak\noindent{\bf Remark \thesubsection\space}}%
   {\par\medbreak}%
   {\refstepcounter{subsection}%
        \medbreak\noindent{\bf Example \thesubsection\space}}%
   {\par\medbreak}%
\newenvironment{proof}%
   {\medbreak\noindent{\it Proof:\space}}%
   {\par\noindent\vrule height 5pt width 5pt depth 0pt\smallbreak}%
\newcommand{\df}{\em}
\let\sauvegardetiret=\-
\renewcommand{\-}[1]{\ifx#1-\penalty10000\hbox{-\relax}\penalty10000\else\sauvegardetiret#1\fi}
\newcommand{\tq}{\,\big/\ }
\newcommand{\NN}{{\mathbf N}}
\newcommand{\ZZ}{{\mathbf Z}}
\newcommand{\QQ}{{\mathbf Q}}
\newcommand{\cA}{{\cal A}}
\newcommand{\cB}{{\cal B}}
\newcommand{\cC}{{\cal C}}
\newcommand{\cF}{{\cal F}}
\newcommand{\cH}{{\cal H}}
\newcommand{\cL}{{\cal L}}
\newcommand{\cO}{{\cal O}}
\newcommand{\cU}{{\cal U}}
\newcommand{\cZ}{{\cal Z}}
\renewcommand{\tq}{\mathop{:}}
\newcommand{\UU}{{\mathbf U}}
\newcommand{\coalg}{\mathop{\rm coalg}}
\newcommand{\lland}{\mathop{\land\mskip-6mu\relax\land}}
\title{Cell decomposition and classification of definable sets in
$p$-optimal fields}
\author{Luck Darni\`ere\footnote{LAREMA, Facult\'e des sciences, 2 bd.
Lavoisier,  49045 Angers Cedex 01, France.}, Immanuel
Halpuczok\footnote{School of Mathematics, University of Leeds,
Woodhouse Lane, Leeds, UK.}
}
\begin{document}

\maketitle

\begin{abstract}
  We prove that for $p$\--optimal fields (a very large subclass of
  $p$\--minimal fields containing all the known examples) a cell
  decomposition theorem follows from methods going back to Denef's
  paper \cite{dene-1984}. We derive from it the existence of definable
  Skolem functions and strong $p$\--minimality. Then we turn to
  strongly $p$\--minimal fields satisfying the Extreme
  Value Property -- a property which in particular holds in fields
  which are elementarily equivalent to a $p$-adic one. For such fields
  $K$, we prove that every definable subset of $K\times K^d$ whose fibers
  over $K$ are inverse images by the valuation of subsets of
  the value group, are semi-algebraic. Combining the two we get a
  preparation theorem for definable functions on $p$\--optimal fields
  satisfying the Extreme Value Property, from which it follows that
  infinite sets definable over such fields are in definable
  bijection iff they have the same dimension. 
\end{abstract}

\section{Introduction}
\label{se:intro}

This paper is an attempt to continue the road opened by Haskell and
Macpherson in \cite{hask-macp-1997} toward a $p$\--adic version of
$o$\--minimality, by isolating large subclasses of $p$\--minimal
fields to which Denef's methods of \cite{dene-1984} apply
with striking efficiency. 

Recall that a {\df $p$\--adically closed} field is a field $K$
elementarily equivalent in the language of rings to a {\df
$p$\--adic field}, that is a finite extension of the field $\QQ_p$ of
$p$\--adic numbers. For every $a$ in $K$, $v(a)$
and $|a|$ denote the $p$\--valuation of $a$ and its norm. The norm is
nothing but the valuation with a multiplicative notation so that
$|0|=0$, $|ab|=|a|\cdot|b|$, $|a+b|\leq\max(|a|,|b|)$ and of course $|a|\leq|b|$
if and only if $v(a) \geq v(b)$. The valuation ring of $v$ is denoted by
$R$, and we fix some $\pi$ in $R$ such that $\pi R$ is the maximal ideal
of $R$. We let $v(K)$ or $|K|$ denote the image of $K$ by the
valuation. 

Throughout all this paper we consider a fixed expansion $(K,\cL)$ of a
$p$\--adically closed field $K$, that is an $\cL$\--structure
extending the ring structure of $K$ for some language $\cL$ containing
the language of rings. Except if otherwise specified, when we say that
a set or a function is definable we always mean ``definable in $\cL$
with parameters in $K$''. For sets and functions definable in the
language of rings (with parameters in $K$ as always), we use the term
``semi-algebraic'' instead. Wherever it is convenient we will identify
subsets of $K^m\times|K|^d$ with their inverse image in $K^{m+d}$ by the
valuation, thus saying for example that the former are definable,
semi-algebraic, and so on if the latter are so.

$(K,\cL)$ is {\df $p$\--minimal} if every definable subset of
$K$ is definable in the language of rings. It is {\df strongly
$p$\--minimal} (or $P$\--minimal for short, as in
\cite{hask-macp-1997}) if every elementarily equivalent
$\cL$\--structure is $p$\--minimal. When the distinction between the
$\cL$\--structure and the ring structure of $K$ is clear from the
context, $K$ itself is called a strongly $p$\--minimal field.

Strong $p$\--minimality was introduced by Haskell and Macpherson in
\cite{hask-macp-1997}. Since their proofs make extensive use of the
model-theoretic Compactness Theorem, very little is known on
$p$\--minimal fields without the ``strong'' assumption contrary to the
situation in $o$\--minimal expansions of real closed fields, where
$o$\--minimality already implies strong $o$\--minimality. They also
left open several questions, such as the existence of a cell
decomposition. 

Mourgues proved in \cite{mour-2009} that a cell decomposition similar
to the one of Denef in \cite{dene-1984} holds for a strongly
$p$\--minimal field $K$ if and only if it has {\df definable Skolem
functions} (``definable selection'' in \cite{mour-2009}), that is if
for every positive integers $m,n$ and every definable subset $S$ of
$K^{m+n}$ the coordinate projection of $S$ onto $K^m$ has a definable
section. It is not known at the moment whether strongly $p$\--minimal
fields always have definable Skolem functions.

As Cluckers noted in \cite{cluc-2004}, a preparation theorem for
definable functions was lacking in \cite{mour-1999}. This remark
applies as well to \cite{mour-2009}. Cluckers filled this lacuna for
the classical analytic structure on $K$ (see below), and derived from
his preparation theorem several important applications, for parametric
integrals and classification of subanalytic sets up to definable
bijection. The former gives the rationality of the Poincar\'e series of
a restricted analytic function. It has been generalised recently to
strongly $p$\--minimal fields in \cite{cubi-leen-2015}, by means of a
slightly different preparation theorem for definable functions.
However this preparation theorem and the cell decomposition that it
uses, are weaker than the original ones studied by Denef, Mourgues and
Cluckers. In particular they do not imply the existence of definable
Skolem functions, and neither the classification of definable sets up
to definable bijection. 

The aim of this paper is to address some of these questions by
introducing another notion of minimality for expansions of
$p$\--adically closed fields, called ``$p$\--optimality'' (see
definition below) with the following properties:
\begin{enumerate}
  \item
    It is {\em intrinsic} (that is its definition only involves the
    given structure, not those which are elementarily equivalent to
    it) {\em natural} and {\em general} enough to include all the
    known examples of $p$\--minimal fields.
  \item 
    Nevertheless it implies strong $p$\--minimality, the existence of
    definable Skolem functions, cell decomposition and (under a mild
    assumption which we will discuss in Remark~\ref{re:mild-min-val})
    cell preparation, so that all the applications of \cite{cluc-2004}
    generalize to this context.
\end{enumerate}

\paragraph{Acknowledgement.} 

We would like to thank Raf Cluckers and Pablo Cubides-Kovacsiks for
helpful discussions. This paper is based on \cite{hask-macp-1997} and
\cite{dene-1984}, with which the reader is expected to be familiar. We
will also make extensive use of \cite{cluc-2003}. Moreover we borrowed
ideas from papers of other authors, especially Raf Cluckers in
\cite{cluc-2004}. The concept of $p$\--optimal field seems to be new
but appears implicitly in many papers on $p$\--adic fields, especially
\cite{dene-1986} which has been a source of inspiration for us.

\paragraph{Defining $p$\--optimal fields.}

By a celebrated theorem of Macintyre \cite{maci-1976} (generalized to
$p$\--adically closed fields in \cite{pres-roqu-1984}) when $K=\QQ_p$ every
semi-algebraic subset of $K^m$ is a (finite) boolean combination of
sets of the form 
\begin{equation}\label{eq:def-basic}
  S=\big\{x\in K^m\tq f(x)\in P_N \big\}
\end{equation}
with $f$ a polynomial function, $N\geq1$ an integer and
\begin{displaymath} 
  P_N=\big\{x\in K\tq \exists y\in K,\ x=y^N\big\}.
\end{displaymath}
We define {\df $d$\--basic functions} as $m$\--ary functions for some
$m$ which are polynomial in the last $d$ variables with as
coefficients global definable functions in the $m-d$ first variables,
and {\df $d$\--basic sets (of power $N$)} as the sets of the same form
as (\ref{eq:def-basic}) with $d$\--basic functions instead of 
polynomial\footnote{Note that a global function in $m$ variables is
  $m$\--basic if and only if it is polynomial, hence Macintyre's
  theorem can be rephrased as: every semi-algebraic subset of $K^m$ is
  $m$\--basic.\label{fo:mac-rephrased}} functions.
When $d=1$ we simply talk about basic functions and sets. We say that
$(K,\cL)$ (or simply $K$ for short) is {\df $p$\--optimal} if every
definable subset of $K^m$ is a (finite) boolean combination of basic
sets, for every $m$.

\begin{remark}\label{re:bool-basic}
  By the argument of Lemma~2.1 in \cite{dene-1984}, the following subsets
  of $K^m$ are $d$\--basic, for every $d$\--basic $m$\--ary functions $f$,
  $g$.
  \begin{displaymath}
    \big\{x\in K^m\tq f(x)=0 \big\} \mbox{ \ and \ } 
    \big\{x\in K^m\tq |g(x)|\leq|f(x)| \big\} 
  \end{displaymath}
  Moreover, since $P_N^*=P_N\setminus\{0\}$ is a subgroup of finite index in
  $K^*$, the complement in $K^m$ of a $d$\--basic set is a finite
  union of $d$\--basic sets. Hence every (finite) boolean combination
  of basic sets is the union of intersections of finitely many basic
  sets. All of them can be taken of the same power, because
  $P_{N'}^*$ is a subgroup of $P_N^*$ of finite index for every $N'$
  which is divisible by $N$. 
\end{remark}

\paragraph{(Strong) $p$\--minimality versus $p$\--optimality.} Note
that $p$\--optimal fields are {\em not assumed to be} strongly
$p$\--minimal. They are $p$\--minimal because basic
subsets of the affine line $K$ are semi-algebraic. Moreover it is
difficult to imagine any proof of $p$\--minimality which does not
involve in a way or another a quantifier elimination result similar to
Macintyre's Theorem. The condition defining $p$\--optimality is
actually very close to such kind of elimination. So close that we can
expect it to be proved simultaneously in most cases, if not all,
without additional effort.
Although not surprising, it is then quite remarkable that every
$p$\--optimal field is strongly $p$\--minimal. More precisely,
recalling that $(K,\cL)$ is an expansion of a $p$\--adically closed
field we have (Theorem~\ref{th:equiv-p-opt}):

\begin{theorem}\label{th:intro-equiv-p-opt}
  The following are equivalent:
  \begin{enumerate}
    \item\label{it:intro-p-opt}
      $(K,\cL)$ is $p$\--optimal.
    \item\label{it:intro-CD}
      Denef's Cell Decomposition Theorem~\ref{th:cell-dec-PN} holds in
      $(K,\cL)$.
    \item\label{it:intro-strong}
      $(K,\cL)$ is strongly $p$\--minimal and has definable Skolem
      functions.
  \end{enumerate}
\end{theorem}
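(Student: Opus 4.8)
The plan is to prove the three implications \ref{it:intro-p-opt}$\Rightarrow$\ref{it:intro-CD}$\Rightarrow$\ref{it:intro-strong}$\Rightarrow$\ref{it:intro-p-opt}, following the architecture of Denef's original argument in \cite{dene-1984} but replacing polynomials by basic functions throughout.

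\medbreak\noindent\textbf{From $p$\--optimality to Cell Decomposition.} First I would run Denef's inductive proof of cell decomposition verbatim, checking at each step that the class of basic (and $d$\--basic) functions is closed under the operations used. By definition of $p$\--optimality every definable subset of $K^m$ is a boolean combination of basic sets, and by Remark~\ref{re:bool-basic} it is a finite union of finite intersections of basic sets all of the same power $N$. The key case, exactly as in Denef, is the one\--variable situation: given finitely many basic functions $f_i(x_1,\dots,x_{m-1},y)$, polynomial in $y$ with definable coefficients, one applies the $p$\--adic Hensel/Newton\--polygon machinery of \cite{dene-1984} (Lemma~2.1 and the surrounding combinatorics) to the last variable $y$ to obtain a partition of $K^{m-1}$ into definable pieces over each of which the set $\{y : f_i(x,y)\in P_N\}$ is described by finitely many inequalities $v(a(x))\,\square\,v(y-c(x))\,\square\,v(b(x))$ with $c$ a ``center'' and $a,b,c$ global definable functions of $x$. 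The point is that the centers and the radii produced by this algorithm are built from the coefficients of the $f_i$ by field operations and by extracting roots of bounded degree, hence are again global definable functions, so the resulting pieces are cells in Denef's sense. Then one finishes by the usual induction on the number of variables, using definable choice of representatives modulo $P_N^*$ (a finite\--index subgroup) to handle the $\lambda$\--parametrization of cells.

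\medbreak\noindent\textbf{From Cell Decomposition to strong $p$\--minimality with Skolem functions.} Cell decomposition gives definable Skolem functions immediately: a cell over $K^m$ has a canonical definable section (e.g. on a cell $\{(x,t): v(a(x))\,\square_1\,v(t-c(x))\,\square_2\,v(b(x)),\ t-c(x)\in\lambda P_N\}$ one picks $t=c(x)$ when the center lies in the cell, and otherwise chooses a point at a definable valuation and a definable representative of the right coset), and an arbitrary definable set is a finite disjoint union of cells, so one selects a section on the ``first'' nonempty cell in some fixed ordering. For strong $p$\--minimality one must show that \emph{every} model $(K',\cL)\equiv(K,\cL)$ is $p$\--minimal; since cell decomposition for subsets of $K^{m+1}$ is a first\--order schema (for each bound on the data it is expressible by a single sentence, as the centers, radii and powers $N$ range over a fixed finite stock of definable functions), it transfers to $K'$, and cell decomposition in dimension one says precisely that every definable subset of the affine line is a finite boolean combination of cells, hence semi\--algebraic. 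This also re\--proves $p$\--minimality of $K$ itself.

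\medbreak\noindent\textbf{From strong $p$\--minimality with Skolem functions back to $p$\--optimality.} This is the step I expect to be the main obstacle, and it is where the work of Mourgues \cite{mour-2009} enters: under strong $p$\--minimality, definable Skolem functions are equivalent to Denef\--style cell decomposition, so we again have cells at our disposal, now in every dimension. From cell decomposition one reads off that every definable $S\subseteq K^m$ is a finite union of cells; each cell is cut out by conditions of the form $f(x)=0$, $|g(x)|\,\square\,|h(x)|$, and $u(x)\in\lambda P_N$ with $f,g,h,u$ global definable functions. By Remark~\ref{re:bool-basic} the first two kinds of conditions define basic sets, and the last is literally of the form \eqref{eq:def-basic}; intersecting and taking unions keeps us inside boolean combinations of basic sets. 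Hence $S$ is a boolean combination of basic sets, which is exactly $p$\--optimality. The delicate point throughout this last implication is bookkeeping: one must make sure that the ``definable functions'' appearing as coefficients of basic functions are genuinely \emph{global} (total) definable functions $K^{m-d}\to K$, which is why definable Skolem functions — equivalently, the cell decomposition that produces total centers and radii — are indispensable, and why the equivalence is clean only once all three conditions are in play.
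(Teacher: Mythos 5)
Your overall cycle of implications is valid, and for (\ref{it:intro-p-opt})$\Rightarrow$(\ref{it:intro-CD}) and (\ref{it:intro-CD})$\Rightarrow$(\ref{it:intro-strong}) you follow essentially the same path as the paper (Denef's machinery applied to basic functions, which are polynomial in the last variable; then an explicit section of each cell plus a first-order transfer of the statement that each fiber is a disjoint union of a bounded number of one-variable cells). Where you genuinely diverge is the closing implication. The paper proves (\ref{it:intro-strong})$\Rightarrow$(\ref{it:intro-p-opt}) \emph{directly} and self-containedly: pass to an elementary extension $K'$, use $p$\--minimality of $K'$ and Macintyre's theorem to describe each fiber $S'_{x'}\subseteq K'$ as a boolean combination of sets $f(x',t,z')\in P_N$ with $f$ polynomial, use compactness to get finitely many such descriptions uniformly over a definable partition of $K^m$, and then use definable Skolem functions to replace the coefficient tuples $z$ by definable functions $\zeta_i(x)$ --- producing basic functions. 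You instead invoke Mourgues for (\ref{it:intro-strong})$\Rightarrow$(\ref{it:intro-CD}) and close the cycle with the observation that a presented cell mod $P_N^*$ is itself a boolean combination of basic sets (the conditions $|t-c(x)|\,\square\,|\mu(x)|$ and $\lambda^{-1}(t-c(x))\in P_N$ involve degree-$\leq 1$ polynomials in $t$ with definable coefficients, and $X\times K$ is basic via the indicator-function trick). That observation is correct and your route is logically sound; it is shorter, but it makes Mourgues's theorem a load-bearing external input, whereas the paper deliberately avoids relying on it (it only remarks that (\ref{it:intro-strong})$\Rightarrow$(\ref{it:intro-CD}) ``of course'' follows from Mourgues) so that the equivalence is self-contained modulo Denef.

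One spot where your sketch is thinner than it should be is the definable section of a cell in (\ref{it:intro-CD})$\Rightarrow$(\ref{it:intro-strong}): ``chooses a point at a definable valuation and a definable representative of the right coset'' hides the actual content. You must produce, definably in $x$, an element $t$ with $|\nu_A(x)|\leq|t-c_A(x)|\leq|\mu_A(x)|$ and $t-c_A(x)\in\lambda_A P_N^*$; since you cannot yet invoke definable choice (that is what you are proving), the paper does this by partitioning the base so that $\nu_A(x)/\lambda_A$ lies in a fixed coset $aP_N^*$ with $0\leq v(a)<N$ and taking $t=c_A(x)+\nu_A(x)/a$, the verification that this lands below $|\mu_A(x)|$ being a short congruence argument using nonemptiness of the fiber. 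Make that construction explicit and your argument is complete.
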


Of course (\ref{it:intro-strong})$\Rightarrow$(\ref{it:intro-CD}) follows from
\cite{mour-2009} (not the other implications, because Mourgues
considers only strongly $p$\--minimal fields). Since every known
example of $p$\--minimal field is strongly $p$\--minimal and has
definable Skolem functions, Theorem~\ref{th:intro-equiv-p-opt} shows
that all of them are indeed $p$\--optimal.

\paragraph{Main other results.} 
Remember that, identifying any subset of $K^m\times|K|^d$ with its inverse
image in $K^{m+d}$ by the valuation, we call the former definable,
semi-algebraic, $d$\--basic, or basic, if the latter is so. Similarly
a function from $X\subseteq K^m$ to $|K|^d$ is definable or semi-algebraic if
its graph is so, in this broader sense. In Section~\ref{se:rel-p-min}
we will consider strongly $p$\--minimal fields satisfying the
following condition.
\begin{itemize}
  \item[(*)]
    Every continuous definable function from a closed and bounded
    definable set $X\subseteq K$ to $|K|\setminus\{0\}$ attains a minimum value. 
\end{itemize}
We call it the {\df  Extreme Value Property}. Note that it is not at
all a restrictive assumption: if $(K,\cL)$ is elementarily equivalent
to $(K',\cL)$ for some $p$\--adic field $K'$ then the Extreme Value
Property trivially holds true in $K'$ (because its $p$\--valuation
ring is compact), and passes to $K$ by elementary equivalence. It is
proved in \cite{cluc-2001} (Theorem~6) that if $(K,\cL)$ is strongly
$p$\--minimal then the definable subsets of $|K^d|$ are
semi-algebraic. The following is a ``relative'' version of this result
(Theorem~\ref{th:rel-p-min} and Corollary~\ref{co:pd-opt}). 

\begin{theorem}\label{th:intro-pmd}
  If $(K,\cL)$ is strongly $p$\--minimal and satisfies the 
  Extreme Value Property, then every definable set $S\subseteq K\times|K|^d$ is
  semi-algebraic. If moreover $K$ is $p$\--optimal then every
  definable subset of $K^m\times|K|^d$ is a boolean combination of
  $(d+1)$\--basic sets.
\end{theorem}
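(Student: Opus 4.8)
The plan is to deduce the first statement from the absolute case in \cite{cluc-2001} (Theorem~6) by a fibrewise argument, the only genuinely new ingredient being the treatment of definable functions into the value group, where the Extreme Value Property is used. Let $S\subseteq K\times|K|^d$ be definable. For each $x\in K$ the fibre $S_x\subseteq|K|^d$ is definable, hence semi-algebraic by \cite{cluc-2001}; written additively it is a Presburger subset of $v(K)^d$. By Presburger quantifier elimination each $S_x$ is a boolean combination of sets $\{\bar t:\sum_i a_it_i\leq b(x)\}$ and $\{\bar t:\sum_i a_it_i\equiv b(x)\bmod N\}$ with $a_i\in\ZZ$, $N\in\NN$ and $b(x)\in v(K)\cup\{\infty\}$, and by compactness --- legitimate precisely because strong $p$-minimality is preserved under elementary equivalence --- one may take the $a_i$, the modulus $N$ and the number of constituents bounded uniformly in $x$, so that finitely many definable functions $b_1,\dots,b_s\colon K\to v(K)\cup\{\infty\}$ describe all the fibres at once. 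Since $K$ is $p$-minimal, its definable subsets of $K$ are semi-algebraic, so after a finite semi-algebraic partition of $K$ we may assume each $b_j$ either is identically $\infty$ (the corresponding condition being then trivial) or takes all its values in $v(K)$.

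The heart of the matter is the following lemma: if $X\subseteq K$ is definable and $g\colon X\to v(K)$ is a definable function not taking the value $\infty$, then $g$ coincides with a semi-algebraic function on each part of some finite semi-algebraic partition of $X$. One would prove this by partitioning $X$ semi-algebraically into cells --- possible because $X$ is semi-algebraic --- and arguing cell by cell. On $0$-cells there is nothing to do. On a cell that is punctured-ball-like or unbounded, with centre $c$ say, one reduces after a further finite partition to the case where $g(x)$ depends only on $v(x-c)$ and on a fixed angular component class of $x-c$; invoking the structure of definable functions in $p$-minimal fields from \cite{cluc-2003}, this becomes a definable function of one value-group variable, which is Presburger by \cite{cluc-2001} and hence semi-algebraic, so $g$ is semi-algebraic there. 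On a cell $C$ that is closed and bounded, the image $g(C)$ is a definable, hence Presburger, subset of the value group, and here the Extreme Value Property --- applied to $g$ and to $x\mapsto g(x)^{-1}$ regarded as continuous functions into $|K|\setminus\{0\}$ --- forces $g$ to be bounded, so $g(C)$ is a bounded subset of the discretely ordered value group, therefore finite, and $g$ is piecewise constant on $C$. I expect this last point --- pinning down exactly where the Extreme Value Property is indispensable, and reconciling ``closed and bounded'' with the regularity needed to apply it --- to be the main obstacle; it is consistent with the fact that the Extreme Value Property is the only hypothesis added to strong $p$-minimality.

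Granting the lemma, one refines the partition of $K$ so that every $b_j$ is semi-algebraic on each part, in fact of the form $v\circ h_j$ for a semi-algebraic $h_j$. On a given part the condition $\sum_ia_it_i\leq b_j(x)$ becomes, after identifying each $t_i$ with $v(y_i)$, a comparison of valuations of semi-algebraic functions of $\bar y$ and $x$, hence semi-algebraic, while $\sum_ia_it_i\equiv b_j(x)\bmod N$ becomes membership in finitely many cosets of the subgroup $\{y:N\mid v(y)\}$, which is definable in the language of rings. Thus $S$ is semi-algebraic on each part, hence on all of $K\times|K|^d$. No separate induction on $d$ is needed, Presburger elimination having handled all $d$ fibre coordinates simultaneously.

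For the second statement, assume in addition that $K$ is $p$-optimal; by Theorem~\ref{th:intro-equiv-p-opt} it then has definable Skolem functions and satisfies Denef's cell decomposition theorem. Let $T\subseteq K^m\times|K|^d$ be definable, regarded as a definable family of sets $T_{\bar x}\subseteq K\times|K|^d$ indexed by $\bar x\in K^{m-1}$. By the first statement each $T_{\bar x}$ is semi-algebraic, hence --- by Macintyre's theorem in the form recalled above --- a boolean combination of sets $\{\bar z\in K^{d+1}:g_{\bar x}(\bar z)\in P_N\}$ with $g_{\bar x}$ a polynomial in the last $d+1$ coordinates whose coefficients lie in $K$. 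A compactness argument (again via strong $p$-minimality) gives a single $N$ and uniform bounds on the degrees and on the number of constituents, valid for all $\bar x$, and definable Skolem functions let one select the coefficients of $g_{\bar x}$ as global definable functions of $\bar x$. The resulting polynomials are then $(d+1)$-basic functions, so $T$ is a boolean combination of $(d+1)$-basic sets, which by Remark~\ref{re:bool-basic} may moreover be taken all of the same power.
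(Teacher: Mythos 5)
Your overall architecture for the first statement --- a fibrewise Presburger description of each $S_x$, compactness to make the shapes uniform in $x$, definable selection of the Presburger parameters, and a key lemma turning definable functions from $K$ into the value group into semi-algebraic ones --- is exactly the paper's, and your reduction of the second statement to the first via Macintyre's theorem, compactness and definable Skolem functions also matches. The trouble is that essentially the entire content of the theorem sits inside your ``heart of the matter'' lemma, and your sketch of that lemma has a genuine gap at the crucial step. On a one-dimensional cell with centre $c$ you assert that ``after a further finite partition'' $g(x)$ depends only on $v(x-c)$ and on a fixed angular component class of $x-c$; but \cite{cluc-2003} concerns definable \emph{subsets of the value group}, not definable functions from $K$ into it, and supplies no such reduction. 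This radial dependence is precisely what the Extreme Value Property is needed for. The paper obtains it by introducing the radius-of-constancy function $\rho(a)=\max\{r : B(a,r)\subseteq X \text{ and } f \text{ is constant on } B(a,r)\}$, applying the Extreme Value Property to $\rho$ (not to $g$) to find a point of minimal $\rho$ inside any ball on which $g$ is nonconstant, and using that point to build a definable set $S\subseteq K$ meeting both sides of every such ball; $p$-minimality makes $S$ semi-algebraic, and a cell decomposition mod $Q_{N,M}^*$ adapted to $S$ then forces $g$ to be constant on each sphere $\{t : |t-c_A|=|a|\}$ of each cell, after which the one-variable Presburger result applies. Without an argument of this kind your lemma is unproven.

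Moreover, the one place where you do invoke the Extreme Value Property is itself flawed: from boundedness of $g(C)$ you conclude that $g(C)$ is finite. When $K$ is not a $p$-adic field its value group is a $\ZZ$-group properly extending $\ZZ$, and a bounded Presburger set such as $\{\gamma : 0\le\gamma\le\delta\}$ with $\delta$ nonstandard is infinite, so ``bounded, hence finite, hence piecewise constant'' fails; the hypothesis is simply not meant to be used this way. The remaining ingredients of your write-up --- definable selection in the value group via the $\ZZ$-group structure, translating the Presburger inequalities and congruences into the ring language through $v\circ h_j$, and the Skolem-function argument producing $(d+1)$-basic coefficients --- are correct and agree with the paper, but they do not compensate for the missing core of the key lemma.
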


In Section~\ref{se:cell-prep} we derive from it a preparation
Theorem~\ref{th:cell-prep} for definable functions, analogous to
Theorem~2.8 in \cite{cluc-2004}. As an application we get
(Theorem~\ref{th:isom-dim}):

\begin{theorem}\label{th:intro-isom}
  Two infinite sets definable over a $p$\--optimal field satisfying
  the  Extreme Value Property are isomorphic\footnote{Following
    \cite{cluc-2001} we call ``isomorphism'' the definable
    bijections.\label{fm:isom-bij}} if and only if they have
  the same dimension.
\end{theorem}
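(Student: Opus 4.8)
The plan is to prove the two implications separately, the forward one being essentially formal and the converse one being the substantial part. For the forward direction, suppose $X$ and $Y$ are in definable bijection. Since $p$\--optimality gives cell decomposition (Theorem~\ref{th:intro-equiv-p-opt}), we have a good dimension theory for definable sets, with the usual properties: dimension is invariant under definable bijection. The standard way to see this is that a definable bijection $f\colon X\to Y$ can, by cell decomposition applied to its graph, be decomposed into finitely many pieces on each of which $f$ is ``nice'' (a composition of coordinate permutations, projections with finite fibers, and graph maps), and each such piece preserves dimension; since $\dim X=\max$ of the dimensions of the pieces, we get $\dim X=\dim Y$. This part does not need the Extreme Value Property at all.

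For the converse, the real work, I would follow the strategy of Cluckers in \cite{cluc-2004}: reduce everything to a normal form using the preparation Theorem~\ref{th:cell-prep} and then exhibit explicit bijections between cells of the same dimension. First, by cell decomposition, any infinite definable set $X$ of dimension $e$ is, up to a definable bijection (reordering coordinates and projecting), a finite disjoint union of cells, at least one of which has dimension exactly $e$; and lower-dimensional pieces can be absorbed. So it suffices to show that a single cell of dimension $e$ inside some $K^m$ is in definable bijection with a fixed ``model'' set of dimension $e$, say $R^e$ (or $K^e$ minus lower-dimensional garbage) — more precisely with a canonical $e$\--dimensional object, and then that any two such are in bijection with each other. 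The inductive step is: an $e$\--dimensional cell $C\subseteq K^{m}$ fibers over an $(e')$\--dimensional cell $C'\subseteq K^{m-1}$ with fibers that are either finite (then $e=e'$, and one uses definable Skolem functions plus the fact that a definable set with uniformly finite fibers is in bijection with a piece of the base) or one\--dimensional cells of the form $\{t : v(t-c(x))\in A_x,\ \mathrm{ac}(t-c(x))=\xi\}$ (then $e=e'+1$). In the latter case the preparation theorem lets us straighten the cell's center $c$ and its ``angular component'' data, reducing the fiber to a set depending only on a condition $v(t)\in A_x$ on the valuation; and here is exactly where Theorem~\ref{th:intro-pmd} enters — it tells us the family $A_x\subseteq |K|$ is semi-algebraic, hence after a further definable reparametrization of the base it can be taken to be one of finitely many ``standard'' types (a $p$\--adic interval in the value group), each of which is in definable bijection with a fixed standard one\--dimensional set. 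Splicing these bijections together fiberwise over the base — which requires definable Skolem functions to choose them uniformly — yields a definable bijection $C\cong (\text{standard }e\text{-dim cell})$.

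Finally I would check that any two standard $e$\--dimensional cells are in definable bijection: this is a bookkeeping lemma about $p$\--adic semi-algebraic sets, handling the finitely many interval types in the value group and the finitely many cosets of $P_N^*$ that arise, using that $R$ is in semi-algebraic bijection with $R\setminus\{0\}$, that $R$ is in bijection with $R\times R_N^*$ for the group of $N$\--th power units, and similar classical facts — essentially the content of the corresponding step in \cite{cluc-2004}, which goes through verbatim once one has the preparation theorem in hand. The main obstacle is the fiberwise gluing: making sure that all the reparametrizations, the choice of which standard type the fiber $A_x$ belongs to, and the resulting bijections vary \emph{definably} in the base parameter $x$. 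This is precisely what definable Skolem functions (available by Theorem~\ref{th:intro-equiv-p-opt}) are for, and what forces us to have the \emph{uniform} semi-algebraicity statement of Theorem~\ref{th:intro-pmd} rather than just a pointwise one; with those two ingredients the argument of \cite{cluc-2004} transfers, and no genuinely new difficulty arises beyond careful organization.
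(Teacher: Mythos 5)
Your proposal is correct in outline, but it takes a substantially longer route than the paper in the converse direction. The paper's proof is a two-step reduction: first, Corollary~\ref{co:isom} (the corollary of the Cell Preparation Theorem~\ref{th:cell-prep}) already says that \emph{every} definable set is in definable bijection with a semi-algebraic set $\tilde A$, and $\dim\tilde A=\dim A$ by invariance of dimension under definable bijections (Corollary~6.4 of \cite{hask-macp-1997}, which also gives the forward implication); second, the main theorem of \cite{cluc-2001} classifies infinite semi-algebraic sets up to semi-algebraic bijection precisely by their dimension, so $\tilde A$ and $\tilde B$ are semi-algebraically isomorphic and we are done. What you propose instead is to re-run the whole classification machine directly in the definable category: normalize cells to standard models, absorb lower-dimensional pieces, and prove the bookkeeping lemma that any two standard $e$-dimensional cells are isomorphic. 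That would work, but the steps you defer to ``classical facts'' (e.g.\ $R\cong R\setminus\{0\}$, absorption of lower-dimensional garbage, the coset and interval bookkeeping) are exactly the content of \cite{cluc-2001}; you are in effect re-proving that paper with definable decorations rather than reducing to it, and your sketch would need those steps written out. The paper's approach buys a short proof by packaging all the fiberwise straightening you describe into Corollary~\ref{co:isom} once and for all; your approach is more self-contained in spirit but gains nothing beyond what the reduction already provides. (Your forward direction agrees with the paper's, which simply cites dimension invariance rather than re-deriving it from cell decomposition.)
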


\begin{remark}\label{re:mild-min-val}
  As already mentioned the Extreme Value Property is not a strong
  assumption. In particular it holds true for every semi-algebraic
  function in a $p$\--adically closed field (by reduction to the
  $p$\--adic case, with the same argument as above). Moreover the Cell
  Preparation Theorem~\ref{th:cell-prep} applied to any unary
  definable function $f$ from a closed and bounded set $S\subseteq K$ to
  $K\setminus\{0\}$ gives that the function $|f|:S\to |K|\setminus\{0\}$ is semi-algebraic,
  hence has a minimum value. So the Cell Preparation Theorem holds
  true in a $p$\--optimal field if and only if it satisfies the
  Extreme Value Property. 
\end{remark}

\paragraph{Other terminology and notation.} 

For convenience we will sometimes add to $K$ one more element $\infty$,
with the property that $|x|<|\infty|$ for every $x$ in $K$. We also denote by
$\infty$ any partial function with constant value $\infty$.

Topological notions refer to the topology of the $p$\--valuation,
or its image in $|K|$. 

For every subset $X$ of $K$ we let $X^*=X\setminus\{0\}$. Note the
difference between $R^*=R\setminus\{0\}$ and $R^\times=$ the set of units in $R$.

Recall that $K^0$ is a one-point set. When a tuple $a=(x,t)$ is given
in $K^{m+1}$ it is understood that $x=(x_1,\dots,x_m)$ and $t$ is the last
coordinate. We let $\widehat{a}=x$ denote the projection of $a$ onto
$K^m$. Similarly, the projection of a subset $S$ of $K^{m+1}$ onto
$K^m$ is denoted by $\widehat{S}$. 

We extend $|.|$ (or $v$) to $K^m$ coordinatewise. That is, for every
$x\in K^m$ we let: 
\begin{displaymath}
  \big|(x_1,\dots,x_m)\big|=\big(|x_1|,\dots,|x_m|\big).
\end{displaymath}
For every $A\subseteq K^m$ we let 
$|A|$ denote the image of $A$ by this extension of the valuation.

For every integer $e\geq1$ let $\UU_e=\{x\in K\tq x^e=1\}$.
Analogously to Landau's notation $\cO(x^n)$ of calculus, we let
$\cU_{e,n}(x)$ denote {\em any} definable function in the
multi-variable $x$ with values in $(1+\pi^nR)\UU_e$. So, given a family
of functions $f_i$, $g_i$ on the same domain $X$, we write that
$f_i=\cU_{e,n}g_i$ for every $i$, when there are definable functions
$\omega_i:X\to R$ and $\chi_i:X\to\UU_e$ such that for every $x$ in $X$,
$f_i(x)=\big(1+\pi^n\omega_i(x)\big)\chi_i(x)g_i(x)$. When $e=1$,
$\cU_{1,n}(x)$ is simply written $\cU_n(x)$.

If $K^\circ$ is a finite extension of $\QQ_p$ to which $K$ is elementarily
equivalent as a ring, and $R^\circ$ is the $p$\--valuation ring of
$K^\circ$, then the following set is semi-algebraic (see Lemma~2.1,
point 4, in \cite{dene-1986})
\begin{displaymath}
  Q^\circ_{N,M}=\{0\} \cup \bigcup_{k\in\ZZ}\pi^{kN}(1+\pi^M R^\circ).
\end{displaymath}
We let $Q_{N,M}$ denote the semi-algebraic subset of $K$
corresponding\footnote{For a more intrinsic definition of $Q_{N,M}$
inside $K$, see~\cite{cluc-leen-2012}.} 
by elementary equivalence to $Q^\circ_{N,M}$ in $K$. If $M>2v(N)$, Hensel's
lemma implies that $1+\pi^MR$ is contained in $P_N^*$. Note that
in this case, $Q_{N,M}^*$ is a clopen subgroup of $P_N^*$ with finite index.
The next property also follows from Hensel's lemma (see for example
Lemma~1 and Corollary~1 in \cite{cluc-2001}).

\begin{lemma}\label{le:Hensel-DP}
  The function $x\mapsto x^e$ is a group endomorphism of $Q_{N_0,M_0}^*$. If
  $M_0>v(e)$ this endomorphism is injective and its image is
  $Q_{eN_0,v(e)+M_0}^*$.  
\end{lemma}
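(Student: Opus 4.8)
The plan is to transfer the statement to the $p$\--adic case and then play the binomial expansion off against Hensel's lemma. Each of the three assertions is a first-order sentence in the language of rings, built from the ring\--formulas defining the semi-algebraic sets $Q_{N_0,M_0}$ and $Q_{eN_0,v(e)+M_0}$ and from the term $x\mapsto x^e$, with $e,N_0,M_0$ fixed; since $K$ is elementarily equivalent to a $p$\--adic field $K^\circ$, it therefore suffices to prove the lemma when $K=K^\circ$ (so $R=R^\circ$). In that case $Q:=Q_{N_0,M_0}^*=\bigcup_{k\in\ZZ}\pi^{kN_0}(1+\pi^{M_0}R)$ is literally the definition, one has likewise $Q':=Q_{eN_0,v(e)+M_0}^*=\bigcup_{k\in\ZZ}\pi^{keN_0}(1+\pi^{v(e)+M_0}R)$, and $R$ is Henselian. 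Since $K^\times$ is commutative, $x\mapsto x^e$ is automatically a homomorphism on the subgroup $Q$, so for the first sentence I only need $Q$ to be stable under it: writing $x=\pi^{kN_0}u$ with $u=1+\pi^{M_0}r\in 1+\pi^{M_0}R$ and expanding $u^e=1+e\pi^{M_0}r+\textstyle\sum_{j=2}^{e}\binom{e}{j}\pi^{jM_0}r^{j}$, every term past the first lies in $\pi^{2M_0}R\subseteq\pi^{M_0}R$, so $u^e\in 1+\pi^{M_0}R$ and $x^e=\pi^{keN_0}u^e\in Q$. Under the extra hypothesis $M_0>v(e)$ one has $2M_0\geq v(e)+M_0$, hence in fact $u^e\in 1+\pi^{v(e)+M_0}R$; as $keN_0$ is a multiple of $eN_0$, this already gives the inclusion $Q^e\subseteq Q'$.

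Next I would handle injectivity (still under $M_0>v(e)$) by showing that $1$ is the only $e$\--th root of unity in $Q$: if $\zeta\in Q$ with $\zeta^e=1$, then $\zeta$ is a unit, so $v(\zeta)=0$, which forces the index $k=0$ in the union above and thus $\zeta=1+\pi^{M_0}r$ for some $r\in R$. Expanding $0=\zeta^e-1=e\pi^{M_0}r+\sum_{j\geq 2}\binom{e}{j}\pi^{jM_0}r^{j}$, I would check the estimate $v\big(\binom{e}{j}\pi^{jM_0}\big)>v(e)+M_0=v\big(e\pi^{M_0}\big)$ for every $j\geq 2$; this is the one place where $M_0>v(e)$ is used, via $v(\binom{e}{j})\geq\max(0,\,v(e)-v(j))$ together with $v(j)\leq j-1$ (splitting into the cases $v(e)=0$ and $v(e)\geq 1$). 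It follows that if $r\neq 0$ the leading term $e\pi^{M_0}r$ strictly dominates the whole sum, so $\zeta^e-1\neq 0$; hence $r=0$ and $\zeta=1$. Injectivity of $x\mapsto x^e$ on $Q$ then follows at once by applying this to quotients $x/y$.

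Finally, for the image it remains, given the inclusion $Q^e\subseteq Q'$ above, to prove $Q'\subseteq Q^e$. Given $z=\pi^{keN_0}w\in Q'$ with $w\in 1+\pi^{v(e)+M_0}R$, I would produce an $e$\--th root $y$ of $w$ inside $1+\pi^{M_0}R$ and set $x:=\pi^{kN_0}y$, so that $x\in Q$ and $x^e=\pi^{keN_0}w=z$. To obtain $y$, apply Hensel's lemma to $f(T)=T^e-w$ at $T=1$: here $v(f(1))=v(1-w)\geq v(e)+M_0$ while $v(f'(1))=v(e)$, so the Newton hypothesis $v(f(1))>2\,v(f'(1))$ is exactly the assumption $M_0>v(e)$, and it yields a root $y$ with $v(y-1)\geq v(f(1))-v(f'(1))\geq M_0$, i.e. $y\in 1+\pi^{M_0}R$. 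I do not expect any real obstacle in this argument: the only slightly delicate point is the valuation bookkeeping for the binomial coefficients $\binom{e}{j}$ when the residue characteristic is $2$ and $M_0$ is as small as $v(e)+1$, and the routine (but necessary) verification that the Hensel condition for $T^e-w$ unwinds to precisely $M_0>v(e)$.
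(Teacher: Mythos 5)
The paper offers no proof of this lemma: it is stated as a known consequence of Hensel's lemma, with a pointer to Lemma~1 and Corollary~1 of \cite{cluc-2001}. Your self-contained argument --- reduce to the case of a genuine $p$\--adic field by elementary equivalence (legitimate, since each of the three assertions is a sentence in the language of rings built from the defining formulas of the $Q_{N,M}$'s and the term $x\mapsto x^e$), then play the binomial expansion against the Newton form of Hensel's lemma --- is exactly the verification being delegated to that reference, and the endomorphism, injectivity and surjectivity steps are all structured correctly; in particular the Hensel condition for $T^e-w$ at $T=1$ does unwind to $M_0>v(e)$ and produces a root in $1+\pi^{M_0}R$, giving $Q_{eN_0,v(e)+M_0}^*\subseteq (Q_{N_0,M_0}^*)^e$ as you claim.

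The one point that needs repair is the auxiliary inequality $v(j)\leq j-1$ in the injectivity step: it is false when $K^\circ/\QQ_p$ is ramified, since $v(j)=v(p)\,v_p(j)$ with $v(p)$ the ramification index and $v_p$ the $p$\--adic valuation of the integer $j$ (e.g.\ $v(2)=2>j-1$ for $j=2$ in $\QQ_2(\sqrt{2})$). The estimate you actually need, namely $v\big(\tbinom{e}{j}\big)+(j-1)M_0>v(e)$ for all $2\leq j\leq e$, is nonetheless true, but the correct bookkeeping uses $v_p(j)\leq j-1$ rather than $v(j)\leq j-1$: if $v(e)=0$ the inequality is immediate from $(j-1)M_0\geq1$; if $v(e)\geq1$ then $p\mid e$, so $M_0>v(e)\geq v(p)$, i.e.\ $M_0\geq v(p)+1$, whence for $v_p(j)\geq1$ one gets $(j-1)M_0\geq v_p(j)\big(v(p)+1\big)=v(j)+v_p(j)>v(j)$, while for $v_p(j)=0$ trivially $(j-1)M_0>0=v(j)$; combined with $v\big(\tbinom{e}{j}\big)\geq v(e)-v(j)$ this yields the required domination of the linear term. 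With that substitution your proof is complete.
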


In particular $x\mapsto x^N$ defines a continuous bijection from
$Q_{1,v(N)+1}$ to $Q_{N,2v(N)+1}$. We let $x\mapsto x^\frac{1}{N}$ denote
the reverse bijection.

\section{Cell decomposition}
\label{se:cell-dec}

This section gives an overview of the techniques used in Denef's cell
decomposition. We emphasize that they do not only apply to polynomial
functions, as in \cite{dene-1984}, but also to basic functions. This
allows us to extend Denef's cell decomposition of semi-algebraic sets
over $p$\--adic fields to definable sets over $p$\--optimal fields
(Theorem~\ref{th:cell-dec-PN}).

The cells which usually appear in the literature on $p$\--adic fields
are non empty subsets of $K^{m+1}$ of the form: 
\begin{equation}\label{eq:def-cell}
  \{(x,t)\in X\times K\tq |\nu(x)| \Box_1 |t-c(x)| \Box_2 |\mu(x)| \mbox{ and } t-c(x)\in\lambda
  G\}
\end{equation}
where $X\subseteq K^m$ is a definable set, $c,\mu,\nu$ are definable functions
from $X$ to $K$, $\Box_1,\Box_2$ are $\leq,<$ or no condition, $\lambda\in K$ and $G$
is a semi-algebraic subgroup of $K^*$ with finite index. In this paper
we will only consider the cases when $G$ is $K^*$
(Theorem~\ref{th:cell-dec}), $P_N^*$ (Theorem~\ref{th:cell-dec-PN}) or
$Q_{N,M}^*$ (Theorem~\ref{th:cell-prep}).

In its simplest form, Denef's Cell Decomposition Theorem asserts that
every semi-algebraic subset of $K^m$ is the disjoint union of finitely
many cells. It will be convenient to fix a few more conditions
on our cells, but most of all we want to pay attention on {\em how the
functions defining the output cells depend on the input data}. 

So we define {\df presented cells} in $K^{m+1}$ as tuples
$A=(c_A,\nu_A,\mu_A,\lambda_A,G_A)$ with $c_A$ a definable function on a nonempty
domain $X\subseteq K^m$ with values in $K$, $\nu_A$ and $\mu_A$ either definable
functions on $X$ with values in $K^*$ or constant functions on $X$
with values $0$ or $\infty$, $\lambda_A$ an element of $K$ and $G_A$ 
semi-algebraic subgroup of $K^*$ with finite index, such that for every $x\in
X$ there is $t\in K$ such that:
\begin{equation}\label{eq:cell-cond}
  |\nu_A(x)|\leq|t-c_A(x)|\leq|\mu_A(x)|\mbox{ \ and \ } t-c_A(x)\in \lambda_A G_A.
\end{equation}
Of course the set of tuples $(x,t)\in X\times K$ satisfying
(\ref{eq:cell-cond}) is a cell of $K^{m+1}$ in the usual sense of
(\ref{eq:def-cell}). We call it the {\df underlying cellular set} of
$A$. Abusing the notation we will most often also denote that set by $A$. The
existence, for every $x\in X$, of $t$ satisfying (\ref{eq:cell-cond}) 
simply means that $X$ is exactly $\widehat{A}$. We call it the {\df
base} of $A$. The function $c_A$ is called its {\df center}, $\mu_A$
and $\nu_A$ its {\df boundaries}.
We also speak of a {\df presented cell mod $G$} when $G_A=G$.

A presented cell $A$ is said to be of {\df type} $0$ if $\lambda_A=0$, and
of type $1$ otherwise. Contrary to its center, boundaries, and modulo, the
type of $A$ only depends on its underlying set.

{\em The word ``cell'' will usually refer to presented cells}. However, for
sake of simplicity, we will freely talk of disjoint cells, bounded
cells, families of cells partitioning some set and so on, meaning that
the underlying cellular sets of these (presented) cells have the
corresponding properties. For instance, it is clear that every
cellular set as in (\ref{eq:def-cell}) is in that sense the disjoint
union of finitely many (presented) cells mod $G$. 

\begin{lemma}[Denef]\label{le:skolem1}
  Let $S$ be a definable subset of $K^{m+n}$. Assume that there is an
  integer $\alpha\geq1$ such that for every $x$ in $K^m$ the fiber
  \begin{displaymath}
    S_x=\big\{y\in K^n\tq (x,y)\in S\big\} 
  \end{displaymath}
  has cardinality $\leq\alpha$. Then the coordinate projection of $S$ on $K^m$
  has a definable section.
\end{lemma}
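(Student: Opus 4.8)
The plan is to reduce to the case $n=1$ and then to argue by induction on the fiber bound $\alpha$. Write $X$ for the image of $S$ under the coordinate projection onto $K^m$; a section is a definable function $X\to K^n$ with graph contained in $S$.

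\emph{Reduction in $n$.} If $n\geq2$, view $K^{m+n}$ as $K^{m+n-1}\times K$ and let $S_1\subseteq K^{m+n-1}$ be the image of $S$ under the projection forgetting the last coordinate. For each $x\in K^m$ the fiber of $S_1$ over $x$ is the image of the finite set $S_x$, hence still has cardinality $\leq\alpha$; by induction on $n$ the projection of $S_1$ onto $K^m$ has a definable section $g\colon X\to K^{n-1}$. The set $T=\{(x,t)\in K^{m+1}\tq (x,g(x),t)\in S\}$ then has nonempty fibers of cardinality $\leq\alpha$ over its base $X$, and a definable section $h$ of $T$ (the case $n=1$) yields the definable section $x\mapsto(g(x),h(x))$ of $S$. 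So from now on we may assume $n=1$.

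\emph{Induction on $\alpha$.} For $\alpha=1$ the set $S$ is already the graph of a definable function on $X=\widehat{S}$. For $\alpha\geq2$ I would first peel off the definable piece $\{x\tq |S_x|=1\}$, over which $S$ is again a graph, and work over $X'=\{x\tq |S_x|\geq2\}$. There the diameter $d(x)=\max_{y,z\in S_x}|y-z|$ is a nonzero definable function, and the relation ``$|y-z|<d(x)$'' is an equivalence relation on $S_x$ whose classes are open balls of radius $d(x)$: there are between $2$ and $q$ of them, where $q$ is the (finite) cardinality of the residue field $R/\pi R$, and each has cardinality $<|S_x|\leq\alpha$. If one can choose, definably and uniformly in $x\in X'$, one of these balls $B(x)$, then $S'=\{(x,y)\in S\tq x\in X',\ y\in B(x)\}$ is definable with base $X'$ and all fibers of cardinality $\leq\alpha-1$, so the induction hypothesis applies to $S'$; gluing the sections obtained over $X'$, over $\{x\tq |S_x|=1\}$, and (through the reduction in $n$) over the original $X$ then finishes the proof.

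The step I expect to be the real obstacle is precisely this definable choice among the $\leq q$ sibling balls, which a priori sit in symmetric positions. This is where the $p$\--adically closed structure has to intervene, as in Denef's argument and ultimately in Macintyre's theorem: the value group is discretely ordered and $R/\pi R$ is finite, so one can fix, once and for all, definable transversals for the finite groups $\UU_N$ in $K^*$ and for the cosets of $\pi R$ in $R$ (equivalently, definable $N$\--th root functions on the groups $P_N^*$, extending the partial root maps on the $Q_{N,M}^*$ recalled above). After translating $S_x$ so that its enclosing ball acquires a definable center and normalizing by a definable element of the appropriate valuation, these transversals let one name a canonical point in each of the finitely many sibling balls, and hence single one of them out in a definable way. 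Making this bookkeeping precise, and checking that the ball so chosen genuinely drops the fiber count, is the only non-formal part; everything else is the routine induction above.
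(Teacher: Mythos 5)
The paper does not reprove this lemma at all: its proof is the single line ``identical to the proof of Lemma~7.1 in \cite{dene-1984}'', so the comparison has to be with Denef's argument. Your reduction to $n=1$, the induction on $\alpha$, and the ultrametric partition of a fiber $S_x$ of cardinality $\geq 2$ into at least two and at most $q$ sibling balls of radius equal to the diameter, each meeting $S_x$ in fewer than $\mathrm{Card}(S_x)$ points, are all correct and do form the skeleton of Denef's proof. But the step you yourself flag as ``the real obstacle'' --- the definable choice of one sibling ball --- is exactly where the content lies, and your sketch of it does not go through. ``Normalizing by a definable element of the appropriate valuation'' presupposes a definable section of $v\colon K^*\to v(K^*)$, and none exists: already in $\QQ_p$ the set $p^{\ZZ}$ is not semi-algebraic, since any infinite semi-algebraic subset of $\QQ_p$ contains a translate of some $\lambda(1+p^M\ZZ_p)$ and so is not discrete. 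Likewise ``translating $S_x$ so that its enclosing ball acquires a definable center'' is itself a selection problem, and the one canonical candidate, the centroid $c(x)=\frac{1}{d}\sum_{y\in S_x}y$ with $d=\mathrm{Card}(S_x)$, need not even lie in that ball when $p$ divides $d$ (take $S_x=\{0,1\}$ in $\QQ_2$). So the tie-breaking among the $\leq q$ symmetric balls is a genuine gap, not bookkeeping.

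Here is how Denef actually closes it, avoiding both forbidden operations. Work piecewise so that $\mathrm{Card}(S_x)=d\geq 2$ is constant, and translate by the centroid $c(x)$, which is definable as a symmetric expression in the fiber; on the piece where $c(x)\in S_x$ select it. Otherwise the $d$ elements $y-c(x)$ are distinct, nonzero and sum to $0$, and therefore cannot all lie in a single coset $\lambda(1+\pi^MR)$ once $M>v(d)$: writing $y_i-c=\lambda(1+\pi^Ma_i)$ gives $\sum_i(y_i-c)=\lambda\bigl(d+\pi^M\sum_i a_i\bigr)$, which has valuation $v(\lambda)+v(d)<\infty$. Consequently, first keep only the $y\in S_x$ with $v(y-c(x))$ minimal (a definable condition); if all $d$ points survive they share a valuation, hence lying in one coset of $Q_{1,M}^*$ would mean lying in one coset of $1+\pi^MR$, which is excluded --- so they meet at least two of the finitely many cosets $wQ_{1,M}^*$, $w$ running over a fixed transversal of $R^\times/(1+\pi^MR)$, and keeping only the points in the first such coset that is hit cuts the fiber to at most $d-1$ elements. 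All of this is definable because each $wQ_{1,M}^*$ is a semi-algebraic set; no division by a power of $\pi$ and no choice of a ball center are needed. With this mechanism in hand your induction on $\alpha$ closes (and the diameter/sibling-ball partition can in fact be dispensed with entirely).
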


\begin{proof}
Identical to the proof of Lemma~7.1 in \cite{dene-1984}.
\end{proof}

\begin{lemma}[Denef]\label{le:denef-7-2}
  Let $f$ be an $(m+1)$\--ary basic function with variables
  $(x,t)=(x_1,\dots,x_m,t)$. Let $n\geq1$ be a fixed integer. Then there
  exists a finite partition of $K^{m+1}$ into sets $A$ of the form
  \begin{displaymath}
    A=\bigcap_{j\in S}\bigcap_{l\in S_j}\big\{(x,t)\in K^{m+1}\tq x\in C\mbox{ and }
    |t-c_j(x)|\square_{j,l}|a_{j,l}(x)|\big\}
  \end{displaymath}
  where $S$ and $S_j$ are finite index sets, $C$ is a definable subset
  of $K^m$, and $c_j$, $a_{j,l}$ are definable functions from $K^m$ to
  $K$, such that for all $(x,t)$ in $A$ we have
  \begin{displaymath}
    f(x,t)=\cU_n(x,t)h(x)\prod_{j\in S}\big(t-c_j(x)\big)^{e_j}
  \end{displaymath}
  with $h:K^m\to K$ a definable function and $e_j\in\NN$. 
\end{lemma}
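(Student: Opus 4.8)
The plan is to run Denef's proof of Lemma~7.2 in \cite{dene-1984}, checking that weakening ``polynomial in $t$'' to ``basic in $t$'' costs nothing. Write $f(x,t)=\sum_{i=0}^{d}a_i(x)t^i$ with each $a_i\colon K^m\to K$ definable, and induct on $d=\deg_t f$. The cases $d\le 1$ are immediate: on the piece where $a_d\equiv 0$ (definable by Remark~\ref{re:bool-basic}) we drop to a smaller degree, and on the piece where $a_d$ is nowhere zero we take $c_1=-a_0/a_1$, $h=a_1$, the unit equal to $1$, and no boundary conditions. For $d\ge 2$ the same initial partition of the base reduces us to the case where $a_d$ is nowhere zero.

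Now set $g=\partial f/\partial t$; this is again a basic function in $(x,t)$, of $t$-degree $d-1$ (its coefficients $i\,a_i(x)$ are still global definable functions), so the induction hypothesis — applied with an auxiliary integer $N$, to be fixed at the end, much larger than $n$ — yields a finite partition of $K^{m+1}$ into sets of the required form on which $g(x,t)=\cU_N(x,t)\,\tilde h(x)\prod_{j}(t-c_j(x))^{d_j}$. Fix one such set, with base $C\subseteq K^m$ and centers $c_j$. Over $C$, consider the definable functions $x\mapsto f(x,c_j(x))$ and $x\mapsto c_j(x)-c_{j'}(x)$ (equivalently, the relevant resultants and discriminants of $f$ and $g$), together with the boundary functions already occurring, and refine the partition of $C$ into finitely many definable pieces on which the valuations of all these quantities are linearly ordered in a fixed way. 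This is Denef's sub-partition, and it is here that the roots of $f$ get located: on each resulting cell, either $t$ ranges over an annulus around some $c_j$ which a Hensel's lemma estimate forces to contain a genuine root $\gamma_j(x)$ of $f(x,\cdot)$, with $v(t-\gamma_j(x))$ controlled, or $t$ ranges over a region in which a single monomial dominates the Newton polygon of $f(x,\cdot)$, so that $|t-\beta|=|t-c_j(x)|$ for every root $\beta$ of $f(x,\cdot)$ clustered near $c_j(x)$.

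Each such root-center $\gamma_j(x)$ is definable — it is the unique root of $f(x,\cdot)$ in a definable ball, i.e. the limit of the definable Newton iteration started at $c_j(x)$ — and whenever a choice among the (at most $d$) roots of $f(x,\cdot)$ is needed, Lemma~\ref{le:skolem1} supplies a definable section, so the proof never invokes definable Skolem functions in general, only the finite-fibre case. Grouping the linear factors of $f(x,\cdot)$ according to the center they cluster near and using the valuation orderings fixed above, one obtains on each cell a formula $f(x,t)=\cU_n(x,t)\,h(x)\prod_j(t-\gamma_j(x))^{e_j}$ with $e_j\in\NN$ (the exponent may be $0$ for a center near which $f$ has no root), provided $N$ was taken large enough relative to $n$ and the finitely many valuations occurring; this last point is the standard bookkeeping that forces the induction to be run with $N\gg n$. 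Collecting the finitely many cells and re-indexing the centers (extending $h$ and the $a_{j,l}$ arbitrarily outside the relevant bases) gives the partition claimed. The main obstacle, exactly as in \cite{dene-1984}, is the second step: arranging the sub-partition so that the data coming from $g$ genuinely localizes the roots of $f$, and verifying the multiplicative estimate with the correct $N$. By contrast, the passage from polynomials to basic functions is harmless, since every operation used — differentiating in $t$, forming $f(x,c_j(x))$, taking quotients and products of the coefficient functions — stays inside the class of basic functions once the base has been partitioned as above, and the output functions are only required to be definable.
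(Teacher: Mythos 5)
Your proposal does isolate the two points that the paper's own (one\--line) proof actually consists of: that Denef's argument uses only that $f$ is polynomial in the last variable with definable coefficients, so it applies to basic functions once the output data $c_j$, $a_{j,l}$, $h$ are allowed to be merely definable; and that the only choice principle invoked is the finite\--fibre one, supplied by Lemma~\ref{le:skolem1} in place of Denef's Lemma~7.1. Since the paper's proof is literally ``identical to Denef's with these two substitutions'', you have captured its essential content.

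However, the argument you sketch is not the proof of Lemma~7.2 of \cite{dene-1984}. Denef does not induct on $\deg_t f$ via $\partial f/\partial t$, Hensel approximation and Newton polygons --- that is the Cohen\--style route, closer in spirit to \cite{dene-1986}. Instead he factors $f(x,\cdot)$ over a fixed algebraic closure, uses that $K$ has only finitely many extensions of each bounded degree to write each root as $\sum_j b_{i,j}(x)\xi_j$ for a fixed $K$\--basis $(\xi_j)$, applies the finite\--fibre selection lemma to make the coordinate functions $b_{i,j}$ definable, and takes the centers and boundaries to be $K$\--linear combinations of the $b_{i,j}$; the unit is then extracted factor by factor from $t-r_i(x)=(t-b_{i,1}(x))-\sum_{j\geq2}b_{i,j}(x)\xi_j$. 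This is precisely why Remark~\ref{re:denef-coalg} (the centers and boundaries lie in $\coalg(f)$) is a by\--product of the proof; your route would not obviously deliver that, and the $\coalg$ statement reappears in Theorem~\ref{th:cell-dec} and Corollary~\ref{co:cell-dec-PN}. Moreover, taken as a free\--standing alternative, your sketch is incomplete exactly where you defer to Denef: in the branch where $f(x,\cdot)$ has no $K$\--rational root near the center you only assert equality of norms $|t-\beta|=|t-c_j(x)|$, whereas the conclusion requires an equality of elements up to $\cU_n(x,t)$, i.e.\ control of the angular component of each factor (and of products of conjugate factors) modulo $1+\pi^nR$; this needs a further partition into finitely many balls of radius $|\pi^n|$ times the distance to the nearest root, which your outline omits. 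So either cite Denef's actual argument --- in which case the adaptation really is just the two substitutions you name --- or, if you insist on the derivative induction, the multiplicative bookkeeping still has to be supplied.
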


It is sufficient to check it for every $n$ large enough so we can
assume that:
\begin{equation}\label{eq:D1-n-grand}
  1+\pi^nR\subseteq P_N\cap R^\times
\end{equation}
Thus $\cU_n(x,t)$ in the conclusion could be replaced by $u(x,t)^N$
with $u$ a definable function from $A$ to $R^\times$. This is indeed how
this result is stated in Lemma~7.2 of \cite{dene-1984}. However it
is the above equivalent (but slightly more precise) form which appears
in Denef's proof, and which we retain in this paper. 

\begin{proof}
The proof is exactly the same as the one of Lemma~7.2 of
\cite{dene-1984}. Of course, Lemma~7.1 used in Denef's proof has to be
replaced with the analogous Lemma~\ref{le:skolem1}. (Denef's result
assumes that $f$ is a polynomial, but the proof only uses that it's a
polynomial in the last variable, so it also applies to basic $f$.)
\end{proof}

\begin{remark}\label{re:denef-coalg}
  \textbf{(co-algebraic functions)} 
  A remarkable by-product of Denef's proof is that the functions $c_j$
  and $a_{j,l}$ in the conclusion of Lemma~\ref{le:denef-7-2} belong
  to $\coalg(f)$, which we define now. 
\end{remark}

Given a basic function $f$, we say that a function
$h:X\subseteq K^m\to K$ belongs to $\coalg(f)$ if there exists a finite
partition of $X$ into definable pieces $H$, on each of which the degree
in $t$ of $f(x,t)$ is constant, say $e_H$, and such that the following
holds. If $e_H\leq 0$ then $h(x)$ is identically equal to $0$ on $H$.
Otherwise there is  a family $(\xi_1,\dots,\xi_{r_H})$ of $K$\--linearly
independent elements in an algebraic closure of $K$ and a family of
definable functions $b_{i,j}:H\to K$ for $1\leq i\leq e_H$ and $1\leq j\leq r_H$,
and $a_{e_H}:H\to K^*$ such that for every $x$ in $H$
\begin{displaymath}
  f(x,T)=a_{e_H}(x)\prod_{1\leq i\leq e_H}
         \bigg(T-\sum_{1\leq j\leq r_H}b_{i,j}(x)\xi_j\bigg)
\end{displaymath}
and
\begin{displaymath}
  h(x)=\sum_{1\leq i\leq e_H}\sum_{1\leq j\leq r_H}\alpha_{i,j}b_{i,j}(x)
\end{displaymath}
with the $\alpha_{i,j}$'s in $K$. If $\cF$ is any family of
basic functions we let $\coalg(\cF)$ denote the set of
linear combinations of functions in $\coalg(f)$ for $f$ in $\cF$.

\begin{theorem}[Denef]\label{th:cell-dec} 
  Let $\cF$ be a finite family of $(m+1)$\--ary basic functions.
  Let $n\geq1$ be a fixed integer. Then there exists a finite partition
  of $K^{m+1}$ into presented cells $H$ mod $K^*$ such that the center
  and boundaries of $H$ belong to $\coalg(\cF)\cup\{\infty\}$ and for every $(x,t)$
  in $H$ and every $f$ in $\cF$
  \begin{equation}\label{eq:cell-dec}
    f(x,t)=\cU_n(x,t)h_{f,H}(x)\big(t-c_H(x)\big)^{\alpha_{f,H}}
  \end{equation}
  with $h_{f,H}:\widehat{H}\to K$ a definable function and $\alpha_{f,H}\in\NN$.
\end{theorem}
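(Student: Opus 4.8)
The plan is to deduce Theorem~\ref{th:cell-dec} from Lemma~\ref{le:denef-7-2} by iterating it over the finitely many functions in $\cF$ and then merging the centers appearing in the conclusion into a single one. First I would apply Lemma~\ref{le:denef-7-2} successively to each $f$ in $\cF$ — or rather, run Denef's proof of that lemma simultaneously on the whole finite family, which is harmless since the lemma's proof manipulates only finitely many auxiliary semi-algebraic data at a time — to obtain a common finite partition of $K^{m+1}$ into sets $A$ described by valuation inequalities $|t-c_j(x)|\,\square_{j,l}\,|a_{j,l}(x)|$ over a definable base $C\subseteq K^m$, on each of which every $f\in\cF$ factors as $\cU_n(x,t)\,h_f(x)\prod_{j\in S}(t-c_j(x))^{e_{f,j}}$, with all the $c_j$ and $a_{j,l}$ in $\coalg(\cF)\cup\{\infty\}$ by Remark~\ref{re:denef-coalg}. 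The point of the theorem, as opposed to the lemma, is that we want (i) the pieces to be genuine presented cells mod $K^*$ with a \emph{single} center $c_H$, and (ii) the factorization (\ref{eq:cell-dec}) to involve only that one center, i.e.\ a single power $(t-c_H(x))^{\alpha_{f,H}}$ rather than a product over several $c_j$'s.

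The key step, and the main obstacle, is the reduction to a single center. On a fixed piece $A$ we have several candidate centers $c_1,\dots,c_s$ (the $c_j$, $j\in S$). Following the standard Denef argument, I would further partition $A$ according to the comparison of the values $|c_j(x)-c_k(x)|$ and of $|t-c_j(x)|$ for the various $j$: for each $(x,t)$ there is a $j_0$ minimizing $|t-c_{j_0}(x)|$ (with ties broken by a fixed order on $S$), and on the corresponding sub-piece the ultrametric inequality gives, for every other $j$, either $|t-c_j(x)|=|c_j(x)-c_{j_0}(x)|$ (when the latter is strictly larger than $|t-c_{j_0}(x)|$) or $|t-c_j(x)|=|t-c_{j_0}(x)|$. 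In the first case $(t-c_j(x))/(c_j(x)-c_{j_0}(x))\in 1+\pi R\subseteq R^\times$ after a further finite refinement bringing it into $1+\pi^nR$, so that factor gets absorbed into a $\cU_n$ term together with the unit $c_j(x)-c_{j_0}(x)$ raised to the appropriate power; in the second case one writes $t-c_j(x)=(t-c_{j_0}(x))\cdot\frac{t-c_j(x)}{t-c_{j_0}(x)}$ where the quotient lies in $R^\times$ and, after refinement, in $(1+\pi^nR)\UU_e$ for a suitable fixed $e$ (the boundedly many roots of unity account for the possibility that the quotient is a unit of norm $1$ but not in $1+\pi^nR$), again absorbable into $\cU_n$ since $\UU_e\subseteq\cU_{e,n}\subseteq$ our tolerance once we replace $\cU_n$ by $\cU_{e,n}$ — but the theorem is stated with $\cU_n$, so one instead chooses $n$ large from the start so that $(1+\pi^nR)^\times$ already swallows the relevant finite group, exactly as in the passage before the proof of Lemma~\ref{le:denef-7-2}. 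Collecting all such factors, each $f$ on the refined piece becomes $\cU_n(x,t)\,h_{f,H}(x)\,(t-c_{j_0}(x))^{\alpha_{f,H}}$ with $\alpha_{f,H}=\sum_{j\in S}e_{f,j}$ and $h_{f,H}$ the old $h_f$ multiplied by the various $(c_j-c_{j_0})^{e_{f,j}}$, a new definable function on $\widehat H$.

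Finally I would check that each refined piece $H$ is a presented cell mod $K^*$ with center $c_H=c_{j_0}$: the defining conditions are of the form $|\nu_H(x)|\leq|t-c_H(x)|\leq|\mu_H(x)|$ (and, with $G_H=K^*$ and $\lambda_H=1$, the group condition $t-c_H(x)\in\lambda_H G_H$ is vacuous), where $\nu_H,\mu_H$ are obtained as maxima/minima of the original $a_{j,l}$ and the differences $c_j-c_{j_0}$, hence again lie in $\coalg(\cF)\cup\{\infty\}$ after passing to a finer partition on which these finite extrema are realized by a fixed index — note $\coalg(\cF)$ is closed under the linear combinations used here. The finitely many strict versus non-strict inequalities and the harmless case $\alpha_{f,H}$ or $e_H\leq 0$ (where $c_H$ may be taken to be $0$, i.e.\ $\infty$ as a boundary, consistent with the $\coalg$ convention) are dealt with exactly as in \cite{dene-1984}; and since each piece is nonempty by construction, (\ref{eq:cell-cond}) holds, so $H$ is a legitimate presented cell. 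Taking the union over all $f\in\cF$ of the (common) refinements yields the desired finite partition of $K^{m+1}$ into presented cells mod $K^*$ satisfying (\ref{eq:cell-dec}) for every $f\in\cF$ simultaneously.
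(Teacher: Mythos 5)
Your overall route --- run Lemma~\ref{le:denef-7-2} simultaneously on the family $\cF$, then merge the several centers $c_j$ into a single one by an ultrametric case analysis on which $|t-c_j(x)|$ is smallest --- is exactly the one the paper intends (its proof is literally ``follow the proof of Theorem~7.3 in Denef''), and your first paragraph is fine. The gap is in the merging step, in the range where $|c_j(x)-c_{j_0}(x)|$ and $|t-c_{j_0}(x)|$ differ by a factor strictly between $|\pi|^{n}$ and $|\pi|^{-n}$ (this covers part of your ``first case'' and the nontrivial part of your ``second case''). There the quotient $(t-c_j)/(t-c_{j_0})$ lies in $R^\times$ but its class modulo $1+\pi^nR$ genuinely varies with $(x,t)$, so to reach the stated $\cU_n$ precision you must refine by the finitely many cosets $\omega(1+\pi^nR)$ of $1+\pi^nR$ in $R^\times$ --- not by $(1+\pi^nR)\UU_e$, which is a proper subgroup of $R^\times$ in general; and enlarging $n$ only \emph{shrinks} $1+\pi^nR$, so it cannot be made to ``swallow'' the finite quotient group (the correct move is simply to absorb the constant $\omega^{e_{f,j}}$ into $h_{f,H}(x)$). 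The real problem is geometric: for a coset representative $\omega\notin 1+\pi^nR$, the condition $(t-c_j(x))/(t-c_{j_0}(x))\in\omega(1+\pi^nR)$ rewrites as $t-c_{j_0}(x)\in\lambda(x)(1+\pi^kR)$ with $\lambda(x)=(c_{j_0}(x)-c_j(x))/(\omega-1)$ and $1\le k\le n$, that is, as membership of $t$ in a ball centered at $c_{j_0}(x)+\lambda(x)\neq c_{j_0}(x)$. This is not a condition of the form $|\nu(x)|\le|t-c_{j_0}(x)|\le|\mu(x)|$, so your concluding step ``each refined piece is a presented cell mod $K^*$ with center $c_{j_0}$'' fails. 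Denef's proof resolves this by introducing these new centers (affine combinations of the $c_j$'s, which is why the statement can still promise centers in $\coalg(\cF)$), re-expanding every factor $t-c_l$ around the new center, and recursing on the width of the remaining ambiguous annulus; that recursion is the actual content of the proof of Theorem~7.3 and is absent from your sketch.

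Two smaller slips. First, $\alpha_{f,H}=\sum_{j\in S}e_{f,j}$ is inconsistent with multiplying $h_{f,H}$ by the factors $(c_j-c_{j_0})^{e_{f,j}}$: the exponents of the factors that collapse to functions of $x$ must be removed from $\alpha_{f,H}$, which is the sum of the $e_{f,j}$ over the remaining indices only. Second, in your ``first case'' the quotient lying in $1+\pi R$ is $(t-c_j)/(c_{j_0}-c_j)$ rather than $(t-c_j)/(c_j-c_{j_0})$; harmless in itself, but symptomatic of the coset bookkeeping that the genuine argument has to carry through carefully.
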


\begin{proof}
Follow the proof of Theorem~7.3 in \cite{dene-1984}, using once again
basic functions instead of polynomial functions.
\end{proof}

Given two families $\cA$, $\cB$ of subsets of $K^m$, recall that $\cB$
{\df refines} $\cA$ if $\cB$ is a partition of $\bigcup\cA$ such that every $A$ in
$\cA$ which meets some $B$ in $\cB$ contains it. 

\begin{corollary}[Denef]\label{co:cell-dec-PN}
  Let $\cF$  be a finite family of $m$\--ary basic functions, 
  $N\geq1$ an integer and $\cA$ a family of boolean combinations of
  subsets of $K^m$ defined by $f(x)\in P_N$ with $f$ in $\cF$. Then
  there exists a finite family $\cH$ of cells mod $P_N^*$ with center
  and boundaries in $\coalg(\cF)$ which refines $\cA$. 
\end{corollary}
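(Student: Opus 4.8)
The plan is to reduce Corollary~\ref{co:cell-dec-PN} to Theorem~\ref{th:cell-dec} by a standard ``$m = (m'+1)$'' decomposition of the last coordinate, exactly as Denef does. Write $m = m'+1$ and regard each $f$ in $\cF$ as an $(m'+1)$\--ary basic function in variables $(x,t)$ with $x \in K^{m'}$. Apply Theorem~\ref{th:cell-dec} to the family $\cF$ with some fixed integer $n$ chosen large enough that $1+\pi^n R \subseteq P_N \cap R^\times$ (possible by the remark following Lemma~\ref{le:denef-7-2}). This yields a finite partition of $K^{m'+1}$ into presented cells $H$ mod $K^*$, with centers and boundaries in $\coalg(\cF)\cup\{\infty\}$, such that on each $H$ every $f$ in $\cF$ satisfies $f(x,t) = \cU_n(x,t)\,h_{f,H}(x)\,(t-c_H(x))^{\alpha_{f,H}}$.

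Next I would translate the $P_N$\--conditions into conditions mod $P_N^*$ on each $H$. Fix a cell $H$ with center $c = c_H$. For $(x,t)$ in $H$, and any $f$ in $\cF$, since $\cU_n(x,t) \in 1+\pi^n R \subseteq P_N^*$ (using $1+\pi^n R\subseteq R^\times$ so it is a unit, hence nonzero when $t\neq c(x)$; the case $t = c(x)$ is handled separately by splitting off the thin cell where the center is hit), we have $f(x,t) \in P_N \iff h_{f,H}(x)\,(t-c(x))^{\alpha_{f,H}} \in P_N$. Whether this holds depends only on the coset of $h_{f,H}(x)$ in $K^*/P_N^*$ and on the coset of $(t-c(x))$ modulo $P_N^*$ (more precisely on $\alpha_{f,H} \bmod N$, since $P_N^*$ has exponent dividing the relevant quantity — one uses that $z^N \in P_N^*$ always, so only $\alpha_{f,H}$ mod $N$ matters, and $t-c(x) \in \lambda P_N^*$ pins down $(t-c(x))^{\alpha_{f,H}}$ up to $P_N^*$). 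Since $K^*/P_N^*$ is finite, I partition $\widehat{H}$ further into finitely many definable pieces according to the cosets $h_{f,H}(x)\,P_N^*$ for all $f \in \cF$; these pieces are still bases of presented cells (restricting a presented cell to a definable subset of its base yields a presented cell). On each resulting piece, for each $f$ the truth value of $f(x,t)\in P_N$ becomes a fixed boolean function of the single datum ``$t-c(x)$ lies in which coset of $P_N^*$''.

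Now I split each such cell mod $K^*$ into finitely many subcells mod $P_N^*$: a cellular set with $t-c(x) \in \lambda K^*$ is the disjoint union over coset representatives $\lambda_1,\dots,\lambda_r$ of $K^*/P_N^*$ of the cells with $t-c(x) \in \lambda\lambda_i P_N^*$, keeping the same center, boundaries, and modulo $P_N^*$ (this is the operation already noted after \eqref{eq:def-cell}). On each such subcell mod $P_N^*$, the membership $f(x,t)\in P_N$ is now \emph{constant in $(x,t)$} for every $f\in\cF$, hence every boolean combination in $\cA$ is either the whole subcell or empty on it. Discarding the empty ones and grouping the surviving subcells according to which $A\in\cA$ they fall into, I obtain a finite family $\cH$ of cells mod $P_N^*$ with centers and boundaries in $\coalg(\cF)$ (the $\{\infty\}$ can be removed by the usual convention that a cell with boundary $\infty$ or $0$ is still a legitimate presented cell, or by absorbing it) that partitions $K^m = K^{m'+1}$ and refines $\cA$.

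The main obstacle is purely bookkeeping: verifying that after all the refinements of the base and splittings of the modulus, what remains are genuine presented cells (the existence condition \eqref{eq:cell-cond} is inherited under restricting the base to a definable subset, and under replacing $\lambda G$ by $\lambda\lambda_i P_N^*$ for those $i$ that actually occur), and checking carefully that only $\alpha_{f,H} \bmod N$ and the $P_N^*$\--coset of $t-c(x)$ govern $f(x,t)\in P_N$ — this uses $z^N\in P_N^*$ for all $z\in K^*$ together with $\cU_n(x,t)\in P_N^*$, which is exactly why $n$ was chosen with $1+\pi^n R\subseteq P_N\cap R^\times$. None of this requires new ideas beyond those in \cite{dene-1984}; it is the $P_N$\--refinement of Theorem~\ref{th:cell-dec} in the same way Corollary~7.4 refines Theorem~7.3 there.
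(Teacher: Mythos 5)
Your proposal is correct and follows essentially the same route as the paper: apply Theorem~\ref{th:cell-dec} with $n$ large enough that $1+\pi^nR\subseteq P_N$, refine the bases so that the cosets $h_{f,H}(x)P_N^*$ are constant, split each cell mod $K^*$ into cells mod $P_N^*$ (keeping the same center and boundaries), and observe that on each resulting cell the condition $f(x,t)\in P_N$ is constant, so each $A\in\cA$ either contains or misses it. Your extra remarks (handling $t=c(x)$, the observation that only $\alpha_{f,H}\bmod N$ matters, and the $\{\infty\}$ boundary convention) are harmless elaborations of what the paper leaves implicit.
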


\begin{proof}
Theorem~\ref{th:cell-dec} applies to $\cF$ with $n>2v(N)$, so that
$1+\pi^nR\subseteq P_N$. It gives a partition of $K^m$ into presented cells $B$
mod $K^*$. Every such cell $B$ is the disjoint union of finitely many
presented cells $H$ mod $P_N^*$, whose centers and boundaries are the
restrictions to $\widehat{H}$ of the center and boundaries of $B$ (hence
belong to $\coalg(\cF)$), on which $h_{f,B}(x)P_N^*$ and
$(t-c_B(x))P_N^*$ are constant, simultaneously for every $f$ in $\cF$.
Thus every $A$ in $\cA$ either contains $H$ or is disjoint from $H$ by
(\ref{eq:cell-dec}) and our choice of $n$, which proves the result. 
\end{proof}

The following simpler statement, which follows directly from
Corollary~\ref{co:cell-dec-PN} by $p$\--optimality, is sufficient in
most cases.

\begin{theorem}[Denef's cell decomposition]\label{th:cell-dec-PN} 
  If $(K,\cL)$ is $p$\--optimal, then for every finite
  family $\cA$ of definable subsets of $K^m$ there is for some $N$ a
  finite family of presented cells mod $P_N^*$ refining $\cA$.
\end{theorem}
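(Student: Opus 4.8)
The plan is to reduce Theorem~\ref{th:cell-dec-PN} to Corollary~\ref{co:cell-dec-PN} by exploiting $p$\--optimality to put each member of $\cA$ into the shape required there. First I would use $p$\--optimality: each $A\in\cA$ is a finite boolean combination of basic subsets of $K^m$, i.e.\ sets of the form $\{x\tq f(x)\in P_{N_A}\}$ with $f$ a basic function and $N_A\geq1$ an integer. Let $\cF$ be the (finite) family of all basic functions $f$ occurring in these presentations over all $A\in\cA$, and let $N$ be a common multiple of all the powers $N_A$ that appear. Using Remark~\ref{re:bool-basic} (the fact that $P_{N'}^*$ has finite index in $P_N^*$ whenever $N\mid N'$... here we need the reverse: $P_N^*$ has finite index in $P_{N_A}^*$ when $N_A\mid N$), each condition $f(x)\in P_{N_A}$ is a finite union of conditions of the form $f(x)\in \lambda P_N$ for finitely many coset representatives $\lambda$, hence can be rewritten as a boolean combination of sets of the form $g(x)\in P_N$ with $g$ basic (replacing $f$ by $\lambda^{-1}f$, which is still basic since $\lambda^{-1}$ is a constant, hence a global definable function). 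So after enlarging $\cF$ to include these rescaled functions, every $A\in\cA$ lies in the boolean algebra generated by sets $\{x\tq g(x)\in P_N\}$, $g\in\cF$.

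Next I would apply Corollary~\ref{co:cell-dec-PN} to this enlarged finite family $\cF$, the integer $N$, and the family $\cA$ itself (viewed as a family of boolean combinations of the basic sets just described). This produces a finite family $\cH$ of cells mod $P_N^*$, with centers and boundaries in $\coalg(\cF)$, that refines $\cA$ — which is exactly the conclusion sought, the only difference being that Theorem~\ref{th:cell-dec-PN} does not record the information about where the centers and boundaries live. So the statement follows immediately, and one only needs to note that $\cH$ is indeed a family of \emph{presented} cells mod $P_N^*$, which is built into the output of Corollary~\ref{co:cell-dec-PN}.

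The one genuine step is the bookkeeping in the first paragraph: making sure that a boolean combination of basic sets of possibly different powers $N_A$ can be uniformly rewritten, after rescaling functions by constants and passing to a common power $N$, as a boolean combination of sets $\{x\tq g(x)\in P_N\}$ with $g$ basic. This is precisely the content of Remark~\ref{re:bool-basic} (complements of basic sets are finite unions of basic sets, and all powers can be taken equal), so there is really no obstacle — the proof is just an assembly of Corollary~\ref{co:cell-dec-PN} with $p$\--optimality. I would therefore expect the proof in the paper to be a two\--line invocation of these two facts, and the ``hard part,'' such as it is, was already done in proving Theorem~\ref{th:cell-dec} and Corollary~\ref{co:cell-dec-PN} via Denef's machinery.

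\begin{proof}
By $p$\--optimality and Remark~\ref{re:bool-basic}, every $A$ in $\cA$
is a boolean combination of sets of the form $\{x\in K^m\tq f(x)\in
P_N\}$ for a single integer $N$ and finitely many basic functions $f$;
collect all these functions into a finite family $\cF$. Apply
Corollary~\ref{co:cell-dec-PN} to $\cF$, $N$ and $\cA$: it yields a
finite family of cells mod $P_N^*$ refining $\cA$, which is the desired
conclusion.
\end{proof}
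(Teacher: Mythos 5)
Your proof is correct and is essentially the paper's own argument: the paper states that Theorem~\ref{th:cell-dec-PN} ``follows directly from Corollary~\ref{co:cell-dec-PN} by $p$\--optimality,'' which is exactly your assembly of $p$\--optimality, the uniformization of powers via Remark~\ref{re:bool-basic}, and Corollary~\ref{co:cell-dec-PN}. The bookkeeping you spell out in the first paragraph (rescaling by coset representatives to pass to a common power $N$) is the only content the paper leaves implicit, and you handle it correctly.
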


\begin{remark}\label{re:piece-cont}
  It has been proved in \cite{darn-coba-leen-2015-tmp} that every
  definable function in a strongly $p$\--minimal field is piecewise
  continuous. We will show in the next section that $p$\--optimal
  fields are strongly $p$\--optimal. Thus the boundaries and centers
  of the cells in the above cell decompositions can be chosen
  continuous by refining appropriately a given cell decomposition. 
\end{remark}

\section{From $p$\--optimality to strong $p$\--minimality with Skolem functions}
\label{se:skolem}

\begin{lemma}\label{le:skolem}
  Assume that Denef's Cell Decomposition Theorem~\ref{th:cell-dec-PN}
  holds true for $(K,\cL)$. Then it has definable Skolem functions.
\end{lemma}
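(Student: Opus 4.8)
The plan is to reduce the existence of definable Skolem functions to the cardinality-bounded case already handled by Lemma~\ref{le:skolem1}, using cell decomposition to produce, fiber by fiber, a canonically chosen point. Concretely, let $S\subseteq K^{m+n}$ be definable; we must find a definable section of the projection $\widehat{S}\colon S\to\widehat{S}\subseteq K^m$. By an obvious induction on $n$ it suffices to treat $n=1$, i.e.\ $S\subseteq K^{m+1}$ with fibers $S_x\subseteq K$. Apply Theorem~\ref{th:cell-dec-PN} to the single definable set $S$: there is an integer $N$ and a finite family of presented cells mod $P_N^*$ refining $\{S\}$, so $S$ is partitioned into finitely many cells $A=(c_A,\nu_A,\mu_A,\lambda_A,P_N^*)$ with bases $\widehat{A}\subseteq\widehat{S}$. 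It is enough to define a section over each $\widehat{A}$ separately and then glue by choosing, for each $x\in\widehat{S}$, the first cell (in a fixed enumeration) whose base contains $x$; this gluing is definable.

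So fix one cell $A$ over base $X=\widehat{A}$; I want a definable map $X\to K$ sending $x$ to a point of the fiber $A_x=\{t: |\nu_A(x)|\le |t-c_A(x)|\le|\mu_A(x)|,\ t-c_A(x)\in\lambda_A P_N^*\}$. If $A$ is of type $0$ (i.e.\ $\lambda_A=0$), then $A_x=\{c_A(x)\}$ is a singleton and $x\mapsto c_A(x)$ is the desired section. If $A$ is of type $1$, then $A_x=c_A(x)+\lambda_A\{s\in P_N^*: |\nu_A(x)|\le|\lambda_A s|\le|\mu_A(x)|\}$, and the problem becomes: definably pick one element $s$ of $P_N^*$ in a prescribed valuative annulus depending on $x$. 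The key point is that an element of $P_N^*$ is specified by its valuation modulo $N$ together with a unit part, and within any single valuation class $\pi^{kN}(R^\times)^N$ one can normalize; more simply, the set $T_x=\{s\in P_N^*: |\nu_A(x)|\le|\lambda_A s|\le|\mu_A(x)|\}$ has the property that for each admissible value $w$ of $v(s)$ the set of $s\in P_N^*$ with $v(s)=w$ is $\pi^{w}(R^\times)^N$, and choosing $w$ to be (say) the least admissible value — which exists and is definable because $|\nu_A|$ and $|\lambda_A|$ are definable functions to $|K|$ and comparison is definable — reduces us to picking one $N$-th power unit of prescribed valuation. For that, note $\pi^{w}\cdot 1\in\pi^{w}(R^\times)^N$ exactly when $w\equiv 0\pmod N$; in general pick the residue $r\in\{0,\dots,N-1\}$ with $w\equiv r\pmod N$ and set $s=\pi^{w-r}u_r$ where $u_0,\dots,u_{N-1}$ are fixed representatives of the (finitely many) classes, i.e.\ fixed elements of $K^*$ with $v(u_r)=r$; then $\pi^{w-r}u_r$ has valuation $w$ and lies in $\pi^{w-r}\cdot u_r$, which is a coset of $(R^\times)^N$ only if $u_r\in\pi^r(R^\times)^N$, so instead one fixes once and for all finitely many $s_0,\dots,s_{N-1}\in P_N^*$ with $v(s_j)=j$ (these exist since $v(P_N^*)$ contains $0$ and is a subgroup, hence all of $\ZZ$ after scaling, or one uses $v(P_N^*)=N\ZZ$ — in that case only $w\in N\ZZ$ is admissible and $s=\pi^{w}$ works directly, wait: $\pi^w$ with $w=Nk$ gives $\pi^{Nk}=(\pi^k)^N\in P_N^*$, good). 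The cleanest route: since $P_N^*$ has finite index in $K^*$, the image $v(P_N^*)$ is a finite-index subgroup of $\ZZ$, say $N'\ZZ$; fix $\sigma\in P_N^*$ with $v(\sigma)=N'$; then for $x\in X$ let $w(x)$ be the least element of $v(P_N^*)$ with $|\nu_A(x)|\le|\lambda_A\pi^{w}|\cdots$ — all of this is a first-order definable recipe, and one sets the section to $c_A(x)+\lambda_A\sigma^{w(x)/N'}$.

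Alternatively, and more robustly, once we know Skolem functions follow from the bounded-fiber case, we can bypass the explicit choice: after cell decomposition, replace each type-$1$ cell $A$ by the definable subset $A'=\{(x,t)\in A: v(t-c_A(x)) \text{ is minimal among } v(t'-c_A(x)) \text{ for }(x,t')\in A\}$ — wait, that still has infinite fibers. The honest extra ingredient is exactly a definable selector for $(R^\times)^N/(1+\pi^M R)$-type cosets; since $P_N^*/(1+\pi^M R)^{\cdots}$ is finite and the relevant "residue" data lives in a finite definable set, Lemma~\ref{le:skolem1} applies to the definable relation "$t$ is in fiber $A_x$ and the $(1+\pi^M R)$-coset of $(t-c_A(x))\pi^{-w(x)}$ is the $\le$-least one" which has fibers of cardinality at most the finite index $[P_N^*:(1+\pi^M R)^*]$, giving a definable section.

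The main obstacle is the type-$1$ case: making the choice of a single point in the valuative annulus $\{t:|\nu_A(x)|\le|t-c_A(x)|\le|\mu_A(x)|\}\cap(c_A(x)+\lambda_A P_N^*)$ genuinely definable and uniform in $x$. Everything else — the reduction to $n=1$, the gluing over the finitely many cell bases, the type-$0$ case — is routine. For the type-$1$ case the cleanest finish is to observe that the annulus, if nonempty, contains a point whose valuation is the minimum of a definable $|K|$-valued function, that this minimum is attained at a definable value (minimum of a nonempty definable subset of $|K|=v(K)$ bounded below is definable since $v(K)$ is a $\ZZ$-group with definable order), and that among the finitely many $P_N^*$-cosets at that fixed valuation we may pick a distinguished one using a fixed finite set of coset representatives; composing, we get the section. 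Thus Denef's Cell Decomposition Theorem yields definable Skolem functions.
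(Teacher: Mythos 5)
Your reduction to a single presented cell mod $P_N^*$, the gluing over the finitely many cell bases, and the type-$0$ case are all fine and match the paper's strategy. The genuine gap is exactly where you sense it: in the type-$1$ case you never actually produce a definable point of the fiber. Having (correctly, via the $\ZZ$-group property of $v(K^*)$) located the extremal admissible valuation $w(x)$, you still have to select one element of the infinite set $c_A(x)+\bigl(\lambda_A P_N^*\cap \pi^{w(x)}R^\times\bigr)$, and each of your three proposed mechanisms fails. The expression $c_A(x)+\lambda_A\sigma^{w(x)/N'}$ is not definable: exponentiation of a fixed $\sigma$ by a variable element of the value group is not a first-order operation, and in a nonstandard model $w(x)/N'$ need not even be a standard integer. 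A ``fixed finite set of coset representatives'' cannot exist either: there are finitely many relevant cosets \emph{at each valuation} $w$, but infinitely many valuations, hence infinitely many cosets overall. Finally, Lemma~\ref{le:skolem1} does not apply to your last relation: a coset of $1+\pi^M R$ inside a sphere is a ball, hence infinite, so the fibers do not have cardinality bounded by the index $[P_N^*:(1+\pi^M R)]$ --- you are conflating the (finite) number of cosets with the (infinite) number of their elements. Moreover forming $(t-c_A(x))\pi^{-w(x)}$ already presupposes a definable section of the valuation, which is itself an instance of the Skolem functions you are trying to construct; the argument is circular.

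The missing idea, which is how the paper closes this step, is to manufacture the selected point from the $K$-valued boundary functions themselves rather than from value-group data. Assuming $\nu_A\neq 0$, partition $\widehat{A}$ so that the coset $\bigl(\nu_A(x)/\lambda_A\bigr)P_N^*$ is constant, say equal to $aP_N^*$ with $a$ a fixed element chosen so that $0\leq v(a)<N$, and set $\tau(x)=c_A(x)+\nu_A(x)/a$. Then $\tau(x)-c_A(x)\in\lambda_A P_N^*$ by construction, $|\nu_A(x)|\leq|\tau(x)-c_A(x)|$ since $a\in R$, and a short computation with valuations modulo $N$ (comparing $v(\nu_A(x)/\lambda_A)\in k+N\ZZ$ with $v\bigl((t-c_A(x))/\lambda_A\bigr)\in N\ZZ$ for any witness $t$ in the fiber) shows $|\tau(x)-c_A(x)|\leq|t-c_A(x)|\leq|\mu_A(x)|$, so $\tau(x)$ lies in the fiber; the case $\nu_A=0$, $\mu_A\neq\infty$ is symmetric, and the case $\nu_A=0$, $\mu_A=\infty$ is handled by $t=c_A(x)+\lambda_A$. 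This replaces your entire lifting discussion and requires no section of the valuation.
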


The proof is taken from the appendix of \cite{dene-drie-1988}. 
It is similar to proposition~4.1 in \cite{mour-2009} except that we do
not assume strong $p$\--minimality (nor any continuity in the boundaries
of the cells). 

\begin{proof}
By a straightforward induction it suffices to prove that for every
definable subset $A$ of $K^{m+1}$ the coordinate projection of $A$
onto $\widehat{A}$ has a definable section. If $A$ is a union of
finitely many definable sets $B$ and if a definable section
$\sigma_B:\widehat{B}\to B$ has been found for each projection of $B$ onto
$\widehat{B}$ we are done. Thus, by cell decomposition, we can assume
that $A$ is a presented cell mod $P_N^*$ for some $N$. We deal with
the case when $A=(c_A,\nu_A,\mu_A,\lambda_A)$ is of type $1$ and $\nu_A\neq0$ or
$\mu_A\neq\infty$, the other cases being trivial. 

If $\nu_A\neq0$, as $P_N^*$ is a definable subgroup of $K^\times$ with finite
index, there is a partition of $\widehat{A}$ into finitely many
definable pieces $X$ on each of which $\nu_A/\lambda_A$ has constant residue
class modulo $P_N^*$. Again it suffices to prove the result for each
piece $A\cap(X\times K)$ of $A$. So we can assume that $X=\widehat{A}$, that
is $\nu_A(x)/\lambda_A\in aP_N^*$ for some constant $a\in K^\times$ and every $x\in
\widehat{A}$. Moreover we can choose $a$ so that $v(a)$ is a
non-negative integer $k<N$. Let $\tau:x\in\widehat{A}\to c_A(x)+\nu_A(x)/a$. If
$(x,\tau(x))\in A$ for every $x\in\widehat{A}$ we are done, since
$\sigma:x\in\widehat{A}\mapsto(x,\tau(x))$ is then a definable section of the
coordinate projection of $A$ onto $\widehat{A}$. So let us prove this.
  
Since $\tau(x)-c_A(x)=\lambda_A(\nu_A(x)/(a\lambda_A))$, it belongs to $\lambda_A P_N^\times$ by
construction. Obviously we also have $|\nu_A(x)|\leq|\nu_A(x)/a|$ because $a\in
R$, and thus $|\nu_A(x)|\leq|\tau(x)-c_A(x)|$. It remains to check that
$|\tau(x)-c_A(x)|\leq|\mu_A(x)|$, that is $|\nu_A(x)/a|\leq|\mu_A(x)|$. Pick any $t\in
K^\times$ such that $(x,t)\in A$. We have $|\nu_A(x)|\leq|t-c_A(x)|\leq|\mu_A(x)|$, so
it suffices to check that $|\nu_A(x)/a|\leq|t-c_A(x)|$, that is
$v(\nu_A(x))-k\geq v(t-c_A(x))$. Let $\delta=(t-c_A(x))/\lambda_A$, since $(x,t)\in A$
we have $v(\nu_A(x)/\lambda_A)\geq v(\delta)$ and $v(\delta)\in v(P_N^*)=N\cZ$. By
construction we also have $v(\nu_A(x)/\lambda_A)\in v(aP_N^*)=k+N\cZ$.
Altogether, since $0\leq k<N$, this implies that $v(\nu_A(x)/\lambda_A)\geq v(\delta)+k$.
So $v(\nu_A(x))-k\geq v(\delta)+v(\lambda_A) = v(t-c_A(x))$, which finishes the proof
in this case.

If $\nu_A=0$ and $\mu_A\neq\infty$ a similar argument on $\mu_A$ gives the
conclusion. 
\end{proof}

\begin{theorem}\label{th:equiv-p-opt}
  The following are equivalent:
  \begin{enumerate}
    \item\label{it:p-opt}
      $(K,\cL)$ is $p$\--optimal.
    \item\label{it:CD}
      Denef's cell decomposition Theorem~\ref{th:cell-dec-PN} holds in
      $(K,\cL)$.
    \item\label{it:sko}
      $(K,\cL)$ is strongly $p$\--minimal and has definable Skolem function.
  \end{enumerate}
\end{theorem}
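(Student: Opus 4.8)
The plan is to establish the cycle of implications $(\ref{it:p-opt})\Rightarrow(\ref{it:CD})\Rightarrow(\ref{it:sko})\Rightarrow(\ref{it:p-opt})$. The first implication is already done: it is precisely Theorem~\ref{th:cell-dec-PN}, which was deduced from Corollary~\ref{co:cell-dec-PN} by $p$\--optimality. So only two implications remain.

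For $(\ref{it:CD})\Rightarrow(\ref{it:sko})$, the existence of definable Skolem functions is exactly Lemma~\ref{le:skolem}, so the real content is to show that cell decomposition forces \emph{strong} $p$\--minimality, not merely $p$\--minimality. First I would observe that $p$\--minimality itself is immediate: a definable subset of $K$ is, by Theorem~\ref{th:cell-dec-PN}, a finite union of cells in $K^{1}$, and each such cell has base $K^{0}$ (a point), center a constant $c\in K$, boundaries constants in $|K|$, and modulus $P_N^*$ with $\lambda\in K$ constant; such a set $\{t : |\nu|\leq|t-c|\leq|\mu|,\ t-c\in\lambda P_N^*\}$ is plainly semi-algebraic. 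The point is then that the statement of Theorem~\ref{th:cell-dec-PN} is \emph{first-order expressible} in an appropriate sense and transfers to elementarily equivalent structures. More precisely, I would argue that for each fixed $m$ and each definable family, the existence of a refining family of cells mod $P_N^*$ with a bounded number of cells and with centers/boundaries drawn from a bounded-complexity class can be captured; alternatively — and this is cleaner — I would invoke the standard fact (going back to the Denef--van den Dries style argument, cf. the appendix of \cite{dene-drie-1988}) that cell decomposition gives, uniformly in parameters, a description of every definable subset of $K$ as a semi-algebraic set whose semi-algebraic parameters are definable functions of the original parameters; running this in a model $(K',\cL)\equiv(K,\cL)$ shows every definable subset of $K'$ is semi-algebraic, i.e. $(K',\cL)$ is $p$\--minimal. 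Hence $(K,\cL)$ is strongly $p$\--minimal.

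For $(\ref{it:sko})\Rightarrow(\ref{it:p-opt})$, I would use the result of Mourgues \cite{mour-2009} cited in the introduction: a strongly $p$\--minimal field with definable Skolem functions satisfies Denef-style cell decomposition. Thus from $(\ref{it:sko})$ we already have $(\ref{it:CD})$ (this is the one implication the excerpt explicitly attributes to \cite{mour-2009}), so it suffices to derive $p$\--optimality from cell decomposition, i.e. to prove $(\ref{it:CD})\Rightarrow(\ref{it:p-opt})$. Given a definable $S\subseteq K^m$, apply Theorem~\ref{th:cell-dec-PN} to the family $\{S\}$ to get finitely many cells mod $P_N^*$ refining it, so $S$ is a finite disjoint union of cells mod $P_N^*$. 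It then remains to check that a single presented cell mod $P_N^*$ with base $X\subseteq K^{m-1}$ — given by $|\nu(x)|\leq|t-c(x)|\leq|\mu(x)|$ and $t-c(x)\in\lambda P_N^*$, possibly iterated down through the coordinates — is a boolean combination of basic sets. Each of the conditions $|t-c(x)|\leq|\mu(x)|$, $|\nu(x)|\leq|t-c(x)|$, $t-c(x)\in\lambda P_N^*$ is, by Remark~\ref{re:bool-basic}, a boolean combination of basic sets in the variables $(x,t)$ (the functions $c,\nu,\mu$ being definable, hence $1$\--basic as global functions composed with the affine map $t\mapsto t-c(x)$; membership in $\lambda P_N^*$ is $\{t-c(x)\in P_N\}$ after the definable substitution absorbing $\lambda$, which is basic of power $N$). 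Intersecting over coordinates and descending by induction on $m$ — using that $X=\widehat S$ is itself a boolean combination of basic sets by the inductive hypothesis — yields that $S$ is a boolean combination of basic sets.

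The main obstacle I anticipate is the transfer step in $(\ref{it:CD})\Rightarrow(\ref{it:sko})$: one must be careful that ``cell decomposition holds'' is genuinely preserved under elementary equivalence, which requires that the relevant bounds (number of cells, complexity/arity of the centers and boundaries as terms built from finitely many symbols) are uniform over a definable family — otherwise the statement is an infinite conjunction of first-order sentences rather than a single one, and one should phrase the transfer accordingly via a scheme of sentences, one per bound. A secondary subtlety is the bookkeeping in $(\ref{it:CD})\Rightarrow(\ref{it:p-opt})$ when unwinding iterated cells: one must verify that the substitution $t\mapsto t-c(x)$ with $c$ merely \emph{definable} (not polynomial) keeps everything inside the class of basic sets — but this is exactly the design of $d$\--basic functions, whose coefficients are allowed to be arbitrary global definable functions, so Remark~\ref{re:bool-basic} applies verbatim. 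Both points are routine once set up correctly; the genuinely new mathematical input is concentrated in Lemma~\ref{le:skolem} and Theorem~\ref{th:cell-dec-PN}, already proved.
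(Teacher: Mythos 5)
Your proposal is correct, and for (\ref{it:p-opt})$\Rightarrow$(\ref{it:CD}) and (\ref{it:CD})$\Rightarrow$(\ref{it:sko}) it follows the paper's own route: the paper likewise derives strong $p$\--minimality by partitioning the base into pieces on which the number and types of the one-variable fiber-cells are constant, writing a parameter-free sentence $\Psi_{k,N}$ asserting that each fiber is a disjoint union of $k_0+k_1+k_\infty$ cells mod $P_N^*$ of the prescribed types, and transferring that sentence to any $\tilde K\equiv K$ -- exactly the uniformity bookkeeping you flag as the delicate point. Where you genuinely diverge is (\ref{it:sko})$\Rightarrow$(\ref{it:p-opt}): the paper proves this directly and self-containedly, by applying Macintyre's theorem fiberwise in an elementary extension to describe each fiber $S'_{x'}$ as a boolean combination of sets $f(x,t,z)\in P_N$ with polynomial $f$, using compactness to make the formulas $\varphi_i$ uniform on a finite definable partition of the base, and then using the definable Skolem functions to select the coefficient tuples $z=\zeta_i(x)$ definably, so that $f(x,t,\zeta_i(x))$ becomes a basic function. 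Your route instead cites Mourgues for (\ref{it:sko})$\Rightarrow$(\ref{it:CD}) (which the introduction explicitly sanctions) and closes the cycle with a direct proof of (\ref{it:CD})$\Rightarrow$(\ref{it:p-opt}) by unwinding a presented cell mod $P_N^*$ into basic sets via Remark~\ref{re:bool-basic}. That implication is sound and is not spelled out anywhere in the paper, so it is a worthwhile observation in its own right; note only that your induction on $m$ to handle the condition $x\in X$ is unnecessary, since for any definable $X\subseteq K^m$ the set $X\times K$ is already basic in $K^{m+1}$ via the indicator-function trick the paper uses in Corollary~\ref{co:pd-opt} (a definable function of $x$ alone is a degree-$0$ polynomial in $t$, hence $1$\--basic). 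The trade-off is that the paper's argument keeps the theorem independent of \cite{mour-2009} and makes visible exactly where the Skolem functions do work (definably choosing Macintyre coefficients), whereas yours is shorter but outsources that content to Mourgues' black box.
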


\begin{proof}
(\ref{it:p-opt})$\Rightarrow$(\ref{it:CD}) is Theorem~\ref{th:cell-dec-PN}. 
Let us prove that (\ref{it:CD})$\Rightarrow$(\ref{it:sko}). 
By Lemma~\ref{le:skolem} it only
remains to derive strong $p$\--minimality from the Cell Decomposition
Theorem~\ref{th:cell-dec-PN}. 

Let $\Phi(\xi,\sigma)$ be a parameter-free formula with $m+1$ variables. It
defines a subset $S$ of $K^{m+1}$ which splits into finitely many
cells $C$ mod $P_N^*$ for some $N$. Let $\cC$ be the family of these
cells, and $X_1,\dots,X_r$ a finite partition of $\widehat{S}$ refining
the $\widehat{C}$'s for $C\in\cC$. For each $i\leq r$ let $\theta_i(\alpha_i,\xi)$ be a
parameter-free formula in $n_i+m$ variables and $a_i\in K^{n_i}$ such
that 
\begin{displaymath}
  X_i=\{x\in K^m\tq K\models\theta_i(a_i,x)\}. 
\end{displaymath}
Let $\Theta(\alpha_1,\dots,\alpha_r)$ be the parameter-free formula in $n_1+\cdots+n_r$ 
variables saying that, given any values $a'_i$ of the parameters
$\alpha_i$, the formulas $\theta_i(a'_i,\xi)$ define a partition of $\widehat{S}$.
In particular we have $K\models\Theta(a_1,\dots,a_r)$. 

Let $\cC_i$ be the family of all the cells $C\cap(X_i\times K)$ for $C\in\cC$.
This is a finite partition of $S\cap(X_i\times K)$ into cells mod $P_N^*$,
which consists in $k_0^i$ cells of type $0$, $k_1^i$ cells $D$ of type $1$
with $\mu_D\neq\infty$, and $k_\infty^i$ cells $D$ of type $1$ with $\mu_D=\infty$. We let
$k^i=(k_0^i,k_1^i,k_\infty^i)$. For every $x\in X_i$, the fiber $S_x=\{t\in
K\tq(x,s)\in S\}$ is the disjoint union of the fibers $C_x$ for
$C\in\cC_i$, each of which is of the same type as $C$. Given a tuple
$k=(k_0,k_1,k_\infty)$ it is an easy exercise to write a parameter-free
formula $\Psi_{k,N}(\xi)$ in $m$ free variables saying that, given any
value $x'$ of the parameter $\xi$, the set of points $t'$ in $K$ such
that $K\models\Phi(x',t')$ is the disjoint union of $k_0$ cells mod $P_N^*$ of
type $0$, $k_1$ cells $D'$ mod $P_N^*$ of type $1$ with $\mu_{D'}\neq\infty$, and
$k_\infty$ cells $D'$ mod $P_N^*$ of type $1$ with $\mu_{D'}=\infty$. By construction
we have
\begin{displaymath}
  K \models \exists \alpha_1,\dots,\alpha_r\, \Theta(\alpha_1,\dots,\alpha_r) \land \lland_{i\leq r} \forall \xi\,
  \big[\theta_i(\alpha_i,\xi)\to\Psi_{k^i,N}(\xi)\big]
\end{displaymath}

This formula is satisfied in every $\tilde K\equiv K$. So there are $\tilde
a_i$ in $\tilde K^{n_i}$ for $i\leq r$ such that the sets 
\begin{displaymath}
  \tilde X_i=\{\tilde x\in \tilde K^m\tq \tilde K\models\theta_i(\tilde a_i,\tilde x)\}
\end{displaymath}
form a partition of $\{\tilde x\in\tilde K^m\tq \exists\tilde t\in \tilde K,\ 
\tilde K\models\Phi(\tilde x,\tilde t)\}$, and for every $\tilde x\in\tilde X_i$
the set of $\tilde t\in \tilde K$ such that $\tilde K\models\theta_i(\tilde x,\tilde
t)$ is the disjoint union of $k_0^i+k_1^i+k_\infty^i$ cells of $\tilde K$.
In particular the formula $\Phi(\tilde x,\tau)$ defines a semi-algebraic
subset of $\widetilde{K}$, whatever is the value of the parameter $\tilde x$ in
$\tilde K^m$. This being true for every formula $\Phi$, it follows that
$\tilde K$ is $p$\--minimal hence that $K$ is strongly $p$\--minimal. 

Finally let us prove that (\ref{it:sko})$\Rightarrow$(\ref{it:p-opt}).
Let $S$ be a definable subset of $K^{m+1}$, and $S'$ the corresponding
definable set in an elementary extension $K'$ of $K$. For every $x'$
in $K'^m$ let $S'_{x'}$ denote the fiber of $\widehat{S'}$ over $x'$:
\begin{displaymath}
  S'_{x'}=\big\{t'\in K'\tq (x',t')\in S'\big\} 
\end{displaymath}
For every $x'$ in $\widehat{S'}$ the $p$\--minimality of $K'$ 
and Macintyre's theorem (see Footnote~\ref{fo:mac-rephrased}) give a
tuple $z'_{x'}$ of coefficients of a description of $S'_{x'}$ as a
boolean combination of basic sets. The model-theoretic Compactness
Theorem then gives definable subsets $X_1,\dots,X_q$ partitioning
$K^m$ and for
every $i\leq q$ an $\cL$--formula $\varphi_i(x,t,z)$ with $m+1+n_i$ free
variables which is a boolean combination of formulas of the form $f(x,t,z)\in P_N$
with $f\in\ZZ[x,t,z]$, such that for every $x$ in $X_i$ there is a list of
coefficients $z_x$ such that
\begin{displaymath}
  S_x=\big\{ t\in K\tq K\models\varphi(x,t,z_x) \big\}.
\end{displaymath}
In other words, for every $x$ in $X_i$
\begin{displaymath}
  K\models\exists z\;\forall t\ \big((x,t)\in S \leftrightarrow \varphi_i(x,t,z)\big).
\end{displaymath}
Our assumption~(\ref{it:sko}) then gives for each $i\leq q$
a definable function $\zeta_i:X_i\to K^{n_i}$ such that for every $x\in X_i$
\begin{displaymath}
  K\models\forall t\ \big[(x,t)\in S \leftrightarrow \varphi_i\big(x,t,\zeta_i(x)\big)\big].
\end{displaymath}
Let $B_i=\{(x,t)\in K^{m+1}\tq K\models\varphi_i(x,t,\zeta_i(x))\}$. By construction this
is a boolean combination of basic subsets of $K^{m+1}$, hence so is
$C_i=B_i\cap(X_i\times K)$. The conclusion follows, since $S$ is the union of
these $C_i$'s.
\end{proof}

\section{Relative $p$\--minimality}
\label{se:rel-p-min}

The aim of this section is to prove the following result. It may be
called ``relative $p$\--minimality''. 

\begin{theorem}\label{th:rel-p-min}
  Assume that $(K,\cL)$ is strongly $p$\--minimal and satisfies the Extreme
  Value Property. Then every definable set $S\subseteq K\times |K|^d$ is
  semi-algebraic, for every $d$. 
\end{theorem}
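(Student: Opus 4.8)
The plan is to reduce to the case $d = 1$ first, then handle the general case by induction on $d$. For $d = 1$, we are looking at a definable set $S \subseteq K \times |K|$, which we identify with its preimage in $K^2$ under $(\mathrm{id}, v)$ — so $S$ is a definable subset of $K \times K$ which is a union of "annuli": for each $x \in K$, the fiber $S_x \subseteq K$ is $v^{-1}$ of a subset $T_x$ of $|K|$. The key point is that $S_x$ is "valuation-invariant": $t \in S_x$ and $v(t) = v(t')$ imply $t' \in S_x$. We want to show $S$ is semi-algebraic, i.e. $T_x \subseteq |K|$ varies semi-algebraically with $x$.

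**First I would** apply cell decomposition (Theorem~\ref{th:cell-dec-PN} — available since $(K,\cL)$ is $p$-optimal... wait, here we only assume strong $p$-minimality, not $p$-optimality). So instead I should use the weaker tools: strong $p$-minimality gives, via Haskell--Macpherson and Cluckers, a cell decomposition of $S$ into semi-algebraic-type cells — but the centers and boundaries are merely \emph{definable}, not semi-algebraic. So write $S$ as a finite disjoint union of cells $C$ with center $c(x)$, boundaries $\nu(x), \mu(x)$, and modulus $P_N^*$, the functions $c, \nu, \mu$ being definable. For each such cell, valuation-invariance of the fibers is a strong constraint: a cell of type $1$ has fibers that are "thin" annuli $t - c(x) \in \lambda P_N^*$ with $|\nu(x)| \le |t - c(x)| \le |\mu(x)|$, and such a fiber is $v^{-1}$ of a subset of $|K|$ only in degenerate situations (e.g. when $\nu = \mu$, or when we're forced onto a single coset). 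The heart of the argument is to show that after refining, we may take the center $c$ to be $0$ (or constant), at which point the cell becomes genuinely of the form $v^{-1}(\text{something semi-algebraic in } x)$, and then invoke the absolute result of Cluckers (\cite{cluc-2001}, Theorem~6) that definable subsets of $|K|^d$ are semi-algebraic — applied to the set $\{(|x|', \gamma) : \gamma \in T_x\}$ after absorbing $x$ into the value group via $|x|$... but $x$ ranges over $K$, not $|K|$, so this needs care.

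**The mechanism I expect to use** for "reducing the center to $0$" is the Extreme Value Property. Here is the idea: fix $x$; the fiber $S_x = v^{-1}(T_x)$ is valuation-invariant. Consider the definable function $x \mapsto$ (minimum of $|t - c(x)|$ over the relevant range, or some extremal gauge of how far $c(x)$ is from "not mattering"). If $c(x) \ne 0$, then valuation-invariance of $S_x$ forces $|t| = |t - c(x)|$ to fail for $t$ near $c(x)$ in valuation, unless the annulus avoids the critical valuation $v(c(x))$ entirely or contains a whole ball around $0$ of that radius. By a compactness/EVP argument on the closed bounded pieces, one shows the set of "bad" $x$ (where the cell genuinely needs its nonzero center) has fibers that are finite, or is lower-dimensional, and can be peeled off by induction on $m$ — wait, $m$ is not a parameter here since $S \subseteq K \times |K|^d$ has base $K$ of dimension $1$. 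So the peeling is: the set of $x \in K$ where something degenerates is finite (by $p$-minimality of $K$), handle those finitely many points separately (trivial), and on the complement the structure is uniform.

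**The main obstacle** will be exactly this step: showing that valuation-invariance of all fibers of a definable cell forces the cell, after finite refinement of the base, to be equivalent to one with trivial (zero or constant) center — and controlling this uniformly in $x$ using the Extreme Value Property rather than just pointwise $p$-minimality. The EVP is what upgrades "for each $x$, the fiber is nice" to "the family is semi-algebraic". Once the center is trivialized, the cell is $\{(x,t) : |\nu(x)| \le |t| \le |\mu(x)|, t \in \lambda P_N^*\}$, whose set of valuations $\{(x, v(t)) : \ldots\}$ is definable in the pair $(v(x), \gamma)$ — and here we invoke that definable subsets of $|K| \times |K|^d$ that are "defined over $|K|$" reduce to the absolute $p$-adic case via Presburger arithmetic on the value group, which is semi-algebraic by Cluckers' theorem; the definable functions $v(\nu), v(\mu) : |K| \to |K|$ are then semi-algebraic, i.e. piecewise linear, and the whole set $S$ is semi-algebraic. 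I would close the induction on $d$ by treating $S \subseteq K \times |K|^{d}$ as a family over $K \times |K|^{d-1}$, absorbing the extra value-group coordinates into the base, and reapplying the $d = 1$ argument relativized — the EVP still applies since closed bounded subsets of $K$ are what matters and the extra coordinates live in $|K|$.
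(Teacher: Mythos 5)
Your plan has two genuine gaps, both at load-bearing points. First, the cell decomposition you start from is not available under the stated hypotheses: strong $p$\--minimality alone does \emph{not} give a decomposition of a definable $S\subseteq K^2$ into cells with \emph{definable} center and boundary functions $c,\nu,\mu:K\to K$ --- by Mourgues' result that is equivalent to having definable Skolem functions, which is exactly what is not assumed here (only $p$\--optimality, not in force in this theorem, would supply it). The paper circumvents this entirely: since each fiber $S_x$ is valuation-invariant, it is a Presburger-definable subset of $G^d=v(K^*)^d$ (Cluckers, Theorem~6 of \cite{cluc-2003}); the normal form for Presburger sets plus compactness gives a uniform ``shape'' over a definable partition of $K$, with parameters living in $G^{2d}$; and definable choice of those parameters is free because a $\ZZ$\--group has definable Skolem functions via $\min/\max$ (Fact~\ref{fa:sko-Z-gp}). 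No cell decomposition of $K^2$ over $K$, and no choice of elements of $K$, is ever needed.

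Second, the step where the Extreme Value Property actually does its work is missing from your write-up. Having chosen the Presburger parameters definably, one has definable functions $\zeta:A\subseteq K\to |K|^{2d}$, and the entire difficulty is to show that such a function (domain $K$, codomain the value group) is semi-algebraic; this is Lemma~\ref{le:SI}, whose proof is a genuinely nontrivial argument using the EVP to produce a minimum of the ``radius of local constancy'' $\rho$ on balls, a definable set $S$ of local minima that meets every ball of nonconstancy, and then Cluckers' theorem again. In your final paragraph you write the boundary functions as maps $|K|\to|K|$, at which point the absolute Presburger result applies trivially --- but they are maps $K\to|K|$, and passing from the latter to the former is precisely the content you would need to supply. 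Your ``trivialize the center via EVP'' mechanism and the induction on $d$ (which pushes the base to $K\times|K|^{d-1}$, outside the scope of the statement and of the EVP as formulated) do not substitute for this; as written the proposal assumes the hard part rather than proving it.
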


We need to state a few preliminary results and to introduce some notation. 
For every $a\in K$ and $r\in |K^*|$ we let 
\begin{displaymath}
  B(a,r)=\big\{y\in K\tq |x-y|<r\big\}
\end{displaymath}
denote the ball of center $a$ and radius $r$.

\begin{fact}\label{fa:sko-Z-gp}
  For every definable set $S\subseteq K^m\times|K|^d$, if $A\subseteq
  K^m$ is the image of the coordinate projection of $S$ onto $K^m$,
  there is a definable function $\sigma:A\to|K|^d$ such that $(x,\sigma(x))\in
  S$ for every $x\in A$.
\end{fact}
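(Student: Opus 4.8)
The plan is to argue by induction on $d$, the base case $d=1$ being the heart of the matter, and for $d=1$ to make an explicit definable choice inside the value group, exploiting that $|K^*|$ is discretely ordered. Throughout I identify $S$ with its inverse image $\widetilde{S}\subseteq K^{m+d}$, which by construction is stable under the equivalence relation putting $(x,y)\sim(x,y')$ whenever $|y_i|=|y'_i|$ for all $i\leq d$. For the step from $d$ to $d+1$, regard $\widetilde{S}$ as a subset of $(K^m\times K^d)\times K$ and let $S'$ be the coordinate projection of $S$ onto $K^m\times|K|^d$ (forget the last $|K|$-coordinate); it is definable and has the same base $A$. An inductively obtained section $\sigma':A\to|K|^d$ of $S'$ produces a definable set
\[
  S''=\big\{(x,r)\in K^m\times|K|\tq (x,\sigma'(x),r)\in S\big\},
\]
again with base $A$, since $(x,\sigma'(x))\in S'$ for every $x\in A$. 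A section $\sigma'':A\to|K|$ of $S''$ --- the case $d=1$ --- then gives the desired section $\sigma=(\sigma',\sigma'')$ of $S$.

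So suppose $d=1$ and fix $x\in A$. By $p$-minimality the fibre $\widetilde{S}_x=\{y\in K\tq (x,y)\in\widetilde{S}\}$ is semi-algebraic, hence so is $W_x=\{|y|\tq y\in\widetilde{S}_x,\ y\neq0\}$, which as a projection of a semi-algebraic set is a semi-algebraic --- that is, Presburger --- subset of the $\ZZ$-group $|K^*|$. If $\widetilde{S}_x=\{0\}$, set $\sigma(x)=0$. Otherwise $W_x\neq\emptyset$, and the set
\[
  U_x=\big\{w\in|K^*|\tq w\leq1\ \text{and}\ \big(w\in W_x\ \text{or}\ w^{-1}\in W_x\big)\big\}
\]
is nonempty (if some element of $W_x$ is $\leq1$ it lies in $U_x$, otherwise the inverse of any element of $W_x$ does) and bounded above by $1$; being a nonempty, bounded-above definable subset of the discretely ordered group $|K^*|$, it has a greatest element $w_0$. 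Put $\sigma(x)=w_0$ if $w_0\in W_x$, and $\sigma(x)=w_0^{-1}$ otherwise; in both cases $\sigma(x)\in W_x$. All of this is expressed uniformly in $x$ from $\widetilde{S}$, the valuation, and the ordered group structure of $|K^*|$, so $\sigma:A\to|K|$ is definable; and $\sigma(x)\in W_x\subseteq\{|y|\tq y\in\widetilde{S}_x\}$ says precisely, by stability of $\widetilde{S}$, that $(x,\sigma(x))\in S$.

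I expect the only real obstacle to be the choice of $\sigma(x)$ when the fibre has no ``extreme'' sphere --- for instance when $\widetilde{S}_x=K^*$, or more generally when $W_x$ is a proper coset of a finite-index subgroup of $|K^*|$, so that $W_x$ has neither a least nor a greatest element. Passing from $W_x$ to $W_x\cup W_x^{-1}$ and then cutting at $1$ cures this, since the resulting set is a nonempty Presburger subset of $|K^*|$ bounded above by $1$, and discreteness of $|K^*|$ forces it to attain its maximum. The remaining ingredients --- definability of the auxiliary projections and of the graph of $\sigma'$, and the bookkeeping in the induction on $d$ --- are routine.
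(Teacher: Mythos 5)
Your argument is correct and is essentially the paper's own proof: both rest on the fact that $|K^*|$ is a $\ZZ$-group in which every nonempty definable subset that is bounded above (resp.\ below) attains a maximum (resp.\ minimum), handle $d=1$ by an explicit definable choice, and then induct on $d$. Your folding trick ($W_x\cup W_x^{-1}$ cut at $1$) is just a concrete way of carrying out the step the paper dismisses with ``the conclusion easily follows if $d=1$'', so there is nothing to add.
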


\begin{proof}
By $p$\--minimality, the value group $v(K^*)$ is simply a $\ZZ$\--group.
Every nonempty definable subset of a $\ZZ$\--group which is bounded
above (resp.\ below) has a largest (resp.\ smallest) element. The
conclusion easily follows if $d=1$, and for $d\geq1$,
it is a straightforward
induction. 
\end{proof}

Beware that $\sigma$ in Fact~\ref{fa:sko-Z-gp} {\em is not a Skolem
function} over $K$, because its codomain is in $|K|$.
The next Lemma shows that this can be fixed, in a strong sense. 

\begin{lemma}\label{le:SI}
  Assume that $(K,\cL)$ is strongly $p$\--minimal and satisfies the Extreme
  Value Property. Then every definable function $f:X\subseteq K\to |K|^d$ is
  semi-algebraic. In particular there is a semi-algebraic function
  $\tilde f:X\to K^d$ such that $f=|\tilde f|$.
\end{lemma}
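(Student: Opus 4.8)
The plan is to reduce to the case $d=1$ and then to establish semi-algebraicity of a definable $f:X\subseteq K\to|K^*|$ (the value $0$ is handled separately on the definable set $f^{-1}(0)$, which is semi-algebraic by $p$-minimality). First I would note that by strong $p$-minimality the domain $X$ is semi-algebraic, so it suffices to prove that the ``inverse-image'' set $\{(x,y)\in X\times K : v(y)\ge f(x)\}$, or equivalently the graph of $f$ viewed inside $X\times|K|$, is semi-algebraic. The reduction from arbitrary $d$ to $d=1$ should go by induction on $d$: writing $f=(f_1,\dots,f_d)$, apply the $d=1$ case to each $f_i$ to find semi-algebraic $\tilde f_i:X\to K$ with $f_i=|\tilde f_i|$, and assemble $\tilde f=(\tilde f_1,\dots,\tilde f_d)$.

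For the core case I would try to show that $f$ agrees, on each piece of a suitable finite semi-algebraic partition of $X$, with $v$ composed with a semi-algebraic function $X\to K$. The natural strategy is to partition $X$ into finitely many definable pieces on which $f$ is monotone (using $p$-minimality of $K$ and $\ZZ$-group structure of the value group, in the style of the cell-decomposition machinery of Section~\ref{se:cell-dec}), and on each such piece use Fact~\ref{fa:sko-Z-gp} to find a definable $\sigma:X\to|K|^d$-style selection; but the real leverage must come from the Extreme Value Property. Concretely, on a closed and bounded piece one wants to say that $x\mapsto |\tilde f(x)|$ for a candidate semi-algebraic $\tilde f$ attains its extrema, and feed this into a rigidity argument forcing $f$ itself to be semi-algebraic. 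A cleaner route, which I expect the authors to take, is to consider the definable set $S=\{(x,y)\in X\times K : v(y)=f(x)\}$, observe its fibers $S_x$ are cosets $y_xR^\times$-type sets (spheres of a single radius), and apply cell decomposition / the definable Skolem machinery available under strong $p$-minimality to pick a definable section $x\mapsto \tilde f(x)\in K$ of $\widehat S\to S$; then $f=|\tilde f|$ by construction, and it remains to see that $\tilde f$ can be chosen semi-algebraic — this is where the Extreme Value Property enters, to rule out pathological definable-but-not-semi-algebraic behaviour in the radius function.

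The main obstacle will be precisely this last point: producing a \emph{semi-algebraic} rather than merely definable $\tilde f$. Strong $p$-minimality tells us each fiber-description is uniformly semi-algebraic in a parameter, but the parameter (the radius) is a priori only a definable function of $x$; the Extreme Value Property is the only non-tautological tool available to upgrade ``definable'' to ``semi-algebraic'' for such value-group-valued functions. I would expect the argument to run by contradiction or by compactness: if $f$ were not semi-algebraic on some piece, one extracts a continuous definable function on a closed bounded definable subset of $K$ witnessing the failure, then invokes (*) to get an extremum that a genuine semi-algebraic function could not have, contradiction. The bookkeeping to set up a genuinely \emph{continuous} function on a \emph{closed and bounded} set — using piecewise continuity (Remark~\ref{re:piece-cont}) and shrinking to closed bounded pieces — is the fiddly part, but conceptually the Extreme Value Property is doing all the work, exactly as Remark~\ref{re:mild-min-val} anticipates.
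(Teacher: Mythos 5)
Your peripheral reductions (splitting off $f^{-1}(\{0\})$, reducing to $d=1$ coordinatewise, and using piecewise continuity to assume $f$ continuous on an open $X$) match the paper, but the core of the argument is missing, and the one concrete route you propose does not work under the hypotheses. Your ``cleaner route'' asks for a definable section $x\mapsto\tilde f(x)\in K$ of the set $\{(x,y)\tq v(y)=f(x)\}$ via ``the definable Skolem machinery available under strong $p$-minimality''. No such machinery is available here: this lemma assumes only strong $p$-minimality and the Extreme Value Property, not $p$-optimality, and whether strongly $p$-minimal fields have definable Skolem functions is precisely one of the open questions the paper discusses. Fact~\ref{fa:sko-Z-gp} only selects values in $|K|^d$, and the paper explicitly warns that this is \emph{not} a Skolem function over $K$. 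Your fallback -- a contradiction argument in which a failure of semi-algebraicity is ``witnessed'' by an extremum that a semi-algebraic function could not have -- is not a workable plan; a single extremum cannot certify that a function is not semi-algebraic, and you give no mechanism for extracting such a witness.

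The actual proof uses the Extreme Value Property quite differently. For each $a$ one defines $\rho(a)$ as the largest radius such that $f$ is constant on $B(a,\rho(a))\subseteq X$ (via Fact~\ref{fa:sko-Z-gp}), and then the definable set $S$ of points $a$ at which $f(a)$ is minimal on $B(a,\rho(a)^+)\cap X$. The Extreme Value Property, applied to $\rho$ on a ball $B$ where $f$ is nonconstant, produces a point $a_0$ minimizing $\rho$; decomposing $B(a_0,\rho(a_0)^+)$ into residue-many balls of radius $\rho(a_0)$, on each of which $f$ is constant, yields points of both $S$ and $X\setminus S$ inside $B$. Since $S\subseteq K$ is definable it is \emph{automatically} semi-algebraic by $p$-minimality, and a cell decomposition of $X$ mod $Q_{N,M}^*$ compatible with $S$ then forces $f$ to be constant on each sphere $\{t\in A\tq|t-c_A|=|a|\}$ (these spheres are balls $a(1+\pi^MR)$, so they cannot meet both $S$ and its complement). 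Thus $f$ factors through $t\mapsto|t-c_A|$ on each cell, and the final, indispensable ingredient is Theorem~6 of \cite{cluc-2003}: the induced definable function between subsets of the value group is Presburger-definable, hence semi-algebraic. This transfer to the value group, which your sketch never invokes, is what actually upgrades ``definable'' to ``semi-algebraic''; the semi-algebraic $\tilde f$ with $f=|\tilde f|$ is then obtained at the very end from definable Skolem functions for \emph{semi-algebraic} sets, which exist unconditionally.
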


For every $r\in|K^*|$ we let $r^+$ denote the element of $|K^*|$
immediately greater than $r$. 

\begin{proof}
If $f=(f_1,\dots,f_d)$ it suffices to prove the result separately for each
$f_i$, hence we can assume that $d=1$. Given a finite partition of $X$
in definable pieces $Y$ it suffices to prove the result for the
restriction of $f$ to each $Y$ separately. Thus by splitting $X$ in
$f^{-1}(\{0\})$ and $X\setminus f^{-1}(\{0\})$ we can assume that $f(X)\subseteq|K^*|$. By
Theorem~3.3 and Remark~3.4 in \cite{hask-macp-1997} there is a
definable open set $U$ contained in $X$ such that $X\setminus U$ is finite and
$f$ is continuous on $U$. By throwing away a finite set if necessary,
we can therefore assume that $f$ is continuous and $X$ is open in $K$.
Finally we can assume that $f$ is not constant on $X$, otherwise the
result is trivial.

For every $a\in X$ the set of $r\in|K^*|$ such that $B(a,r)\subseteq X$ and $f$ is
constant on this ball is definable, nonempty and bounded above
(otherwise $X=K$ and $f$ is constant, which we have excluded) hence by
Fact~\ref{fa:sko-Z-gp} it has a maximum element $\rho(a)$. We are
claiming that the following set
\begin{displaymath}
  S=\big\{a\in X\tq\forall b\in B\big(a,\rho(a)^+\big)\cap X,\ f(a)\leq f(b)\big\}
\end{displaymath}
has the property that for every ball $B\subseteq X$ on which $f$ is
nonconstant, $B$ intersects both $S$ and $X\setminus S$. Indeed let
$B=B(c,r)$ be any such ball. The function $\rho$ is definable, so the
Extreme Value Property gives $a_0\in B$ such that $\rho(a_0)=\min_{b\in
B}\rho(b)$. Since $f$ is nonconstant on $B$, necessarily $\rho(a_0) <r$
hence $B(b,\rho(a_0)^+)\subseteq B$ for every $b \in B$. By construction $f$ is
nonconstant on $B(a_0,\rho(a_0)^+)$. The latter is the disjoint union
of $B(a_0,\rho(a_0))$ and finitely many balls $B(a_i,\rho(a_0))$ for $1\leq i\leq
n$ (where $n+1\geq2$ is the cardinality of the residue field). By
minimality of $\rho(a_0)$, $f$ is constant on each $B(a_i,\rho(a_0))$ hence
there is $i\neq j$ between $0$ and $n$ such that 
\begin{equation}\label{eq:S}
  \forall b\in B(a_0,\rho(a_0)^+),\ f(a_i) \leq f(b)\leq f(a_j).
\end{equation}
Moreover $f$ is nonconstant on the union of $B(a_k,\rho(a_0))$ for $0\leq k\leq n$
hence $f(a_i)< f(a_j)$. It follows that $\rho(a_i)=\rho(a_j)=\rho(a_0)$ and hence
$a_i\in S$ and $a_j\notin S$ by (\ref{eq:S}), which proves our claim. 

$X$ and $S$ are definable subsets of $K$, hence semi-algebraic by
$p$\--minimality. Thus there exists a partition $\cA$ of $X$ in
finitely many cells mod $Q_{N,M}^*$ for some $N,M$ such that $S$ is
also the union of the cells in $\cA$ that it contains. Every cell
$A\in\cA$ can be presented as the set of elements $t\in K$ such that 
\begin{displaymath}
  |\nu_A|\leq |t-c_A|\leq |\mu_A|\mbox{ and }t-c_A\in \lambda_AQ_{N,M}^*.
\end{displaymath}
We are claiming that $f(t)$ only depends on $|t-c_A|$ as $t$ ranges
over $A$. If $\lambda_A=0$ then $A$ is reduced to a point, hence $f$ is
constant on $A$. Otherwise $\lambda_A\neq0$ and for every $a\in K^\times$, we
  have to prove that $f$ is constant on the set $B_a$ of $t\in A$ such
  that $|t-c_A|=|a|$. We can assume that $B_a$ is nonempty,
  hence $|a|=|t_a-c_A|$ for some $t_a\in A$. Then $|\nu_A|\leq|a|\leq|\mu_A|$,
  hence $t\in B_a$ if and only if $|t-c_A|=|a|$ and $t-c_A\in\lambda_A
  Q_{N,M}^*$, that is $B_a=aR^\times\cap\lambda_AQ_{N,M}^*$. Pick any $b\in B_a$, then
  $bR^\times=aR^\times$ and $bQ_{N,M}^*=\lambda_AQ_{N,M}^*$ hence 
  \begin{displaymath}
    B_a= aR^\times\cap aQ_{N,M}^*=a(R^\times\cap Q_{N,M}^*)=a(1+\pi^MR). 
  \end{displaymath}
  In particular $B_a$ is a ball. So by construction of $\cA$, $A$
is either contained in $S$ or in $X\setminus S$ hence so is $B$. But then, by
construction of $S$, $f$ is constant on $B$. This proves our claim. 

Now pick any $A\in \cA$ and translate it by $c_A$. The result is a cell
$A'$ mod $Q_{N,M}^*$ centered at $0$ on which $f(t)$ only depends on
$|t|$. Thus the graph of the restriction $f_{|A}$ of $f$ to $A$ is the
intersection with $\lambda_AQ_{N,M}^*$ of the pre-image by the valuation of
a definable function $\theta:|A'|\to|K|$. By Theorem~6 in \cite{cluc-2003} it
follows that $f_{|A}$ is semi-algebraic, hence so is $f$. The last
point immediately follows from the existence of definable Skolem
functions for semi-algebraic sets (see for example \cite{drie-1984}).
\end{proof}

As already mentioned in the introduction, Theorem~\ref{th:rel-p-min} is a
``relative'' version of Theorem~6 in \cite{cluc-2003}. Since our proof
heavily depends on the main results of \cite{cluc-2003} it is more
convenient here to use additive notation for the value group, so let
$G=v(K^*)$. Theorem~6 in \cite{cluc-2003} actually says that for every
definable set $S\subseteq (K^*)^d$, with $(K,\cL)$ a strongly $p$\--minimal expansion
of a $p$\--adically closed field, the image of $S$ in $G^d$ by the
valuation is definable in Presburger language
\begin{displaymath}
  \cL_{Pres}=\{0,1,+,\leq,(\equiv_n)_{n>0}\} 
\end{displaymath}
where $\equiv_n$ is interpreted in $G$ as the binary congruence relation modulo
the integer $n$. 

It follows from Theorem~1 in \cite{cluc-2003} and Remarks~(iii) just
above it that every subset of $G^d$ definable in the language
$\cL_{Pres}$ is the union of finitely many disjoint sets defined by
the conjunction for $1\leq i\leq d$ of conditions $(E_i)$ of the form
\begin{displaymath}
    \zeta_i+\sum_{1\leq j< i}a_{i,j}\frac{X_j-c_j}{n_j} 
  \mathrel{\square_{i,1}} X_i \mathrel{\square_{i,2}}
  \zeta'_i+\sum_{1\leq j< i}a'_{i,j}\frac{X_j-c_j}{n_j} 
  \mbox{ and }X_i\equiv c_i\;[n_i]
\end{displaymath}
with every $\zeta_i,\zeta'_i\in G$, $a_{i,j},a'_{i,j},c_i,n_i\in\ZZ$, $0\leq c_i< n_i$
and $\square_{i,1},\square_{i,2}$ being either $\leq$ or no condition. Let $\lambda$ be the
list of all these integers and symbols. Let $\Lambda_d$ denote the set of
lists $\lambda$ of this sort. The conjunction of the above conditions
$(E_i)$ for $1\leq i\leq d$ is expressed by a formula $\varphi_\lambda(X,\zeta)$ with free
variables $X=(X_1,\dots,X_d)$ and parameters $\zeta=(\zeta_1,\dots,\zeta_d,\zeta'_1,\dots,\zeta'_d)$.
We let $\varphi_\lambda(X,Z)$ be the corresponding parameter-free formula in
$\cL_{Pres}$ with $d+2d$ free variables. 

With these results in mind we can turn to the proof of
Theorem~\ref{th:rel-p-min}. 

\begin{proof}
Let $S$ be a definable\footnote{Recall that in this context,
``definable'' means that the inverse image of $S$ by the valuation is
definable in $K\times K^d$.} subset of $K\times G^d$.
For every $x\in K$ the fiber $S_x=\{\tau\in G^d\tq(x,\tau)\in S\}$ is definable in
$\cL_{Pres}$ by Theorem~6 in \cite{cluc-2003}. 
Hence there is a finite set of elements $\lambda_1,\dots,\lambda_r\in\Lambda_d$ and 
parameters $\gamma_k\in G^{2d}$ such that the sets $C_{\lambda_k}(\gamma_k)$,
defined as the set of elements $\tau\in G^d$ such that $G\models
\varphi_{\lambda_k}(\tau,\gamma_k)$, form a partition of $S_x$. These formulas
$\varphi_{\lambda_k}(T,Z)$ easily translate into formulas $\psi_{\lambda_k}(T,Z)$
in the language of rings such that for every $t\in K^d$ and every $z\in
K^{2d}$, $K\models\psi_{\lambda_k}(t,z)$ if and only $G\models\varphi_{\lambda_k}(v(t),v(z))$. 

By strong $p$\--minimality the same holds true in every $(K',\cL)\equiv(K,\cL)$.
Hence by the model-theoretic Compactness Theorem there is a partition
of $K$ in finitely many definable sets $A_1,\dots,A_s$ and for each $l\leq s$
a finite set of indexes $\lambda_{1,l},\dots,\lambda_{r_l,l}\in\Lambda_d$ such that for every
$x\in A_l$ there are parameters $\zeta_{x,k,l}\in G^{2d}$ such that $S_x$
is partitioned by the sets $C_{\lambda_{k,l}}(\zeta_{x,k,l})$ for
$k\leq r_l$. By Fact~\ref{fa:sko-Z-gp} there are definable functions $\zeta_{k,l}$ 
from $A_l$ to $G^{2d}$ such that for every $x\in A_l$ the sets
$C_{\lambda_{k,l}}(\zeta_{k,l}(x))$ for $k\leq r_l$ form a partition of
$S_x$. By Lemma~\ref{le:SI} and the Extreme Value Property there are semi-algebraic functions
$\tilde z_{k,l}$ from $A_k$ to $K^{2d}$ such that
$\zeta_{k,l}=|\tilde z_{k,l}|$ (that is $\zeta_{k,l}=v\circ\tilde z_{k,l}$ with
additive notation). 

By the above construction $v^{-1}(S)$ is the disjoint union for $l\leq s$
and $k\leq r_l$ of the sets $B_{k,l}$ of tuples $(x,t)\in A_k\times K^d$ such
that $K\models \psi_{\lambda_{k,l}}(t,\tilde z_{k,l}(x))$. These sets are
semi-algebraic because $\psi_{\lambda_{k,l}}(T,Z)$ is a formula in the language
of rings and $\tilde z_{k,l}$ a semi-algebraic function. Thus
$v^{-1}(S)$ itself is semi-algebraic, hence so is $S$ by definition.
\end{proof}

\begin{corollary}\label{co:pd-opt}
  Assume that $K$ is $p$\--optimal and satisfies the Extreme Value
  Property. Then every definable subset of $K^m\times |K|^d$ is a boolean
  combination of $(d+1)$\--basic sets. 
\end{corollary}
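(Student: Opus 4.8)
The plan is to deduce the statement from Theorem~\ref{th:rel-p-min}, applied \emph{fibrewise} over the first $m-1$ coordinates, by the same model-theoretic compactness argument (now combined with the definable Skolem functions furnished by Theorem~\ref{th:equiv-p-opt}) that proves the implication (\ref{it:sko})$\Rightarrow$(\ref{it:p-opt}) there. We may assume $m\geq1$, the case $m=0$ being Theorem~6 of \cite{cluc-2001}. So let $S\subseteq K^m\times|K|^d$ be definable and put $W=v^{-1}(S)\subseteq K^{m+d}$; the goal is to write $W$ as a boolean combination of $(d+1)$\--basic subsets of $K^{m+d}$. For each $x$ in $K^{m-1}$ the fibre $S_x\subseteq K\times|K|^d$ is definable, hence \emph{semi-algebraic} by Theorem~\ref{th:rel-p-min}: this is the crucial input, and it is exactly what upgrades the naive ``$d$\--basic'' conclusion (which one would get by fibring over all $m$ coordinates and quoting only Theorem~6 of \cite{cluc-2001}) to the sharper ``$(d+1)$\--basic'' one.

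Strong $p$\--minimality and the Extreme Value Property both pass to elementary equivalence — the former by definition, the latter because ``$X$ closed and bounded'', ``$f$ continuous'' and ``$f$ attains a minimum'' are first-order schemes over the $\cL$\--structure — so Theorem~\ref{th:rel-p-min} holds in every $(K',\cL)\equiv(K,\cL)$ too. Therefore, by Macintyre's theorem (in the form of \cite{pres-roqu-1984}) and the usual trick of absorbing polynomial coefficients into extra parameters, each such fibre is cut out by a boolean combination of finitely many conditions $g(t,u,z)\in P_N$ with $g\in\ZZ[t,u,z]$. A standard compactness argument (exactly as in the proof of Theorem~\ref{th:equiv-p-opt}: the increasing definable sets of those $x$ whose fibre admits a description of bounded complexity must eventually exhaust $K^{m-1}$, for otherwise an elementary extension would carry a semi-algebraic fibre of unbounded complexity) then yields a finite partition of $K^{m-1}$ into definable pieces $X_1,\dots,X_q$ and, for each $i\leq q$, an $\cL$\--formula $\varphi_i(t,u,z)$ which is a boolean combination of formulas $g(t,u,z)\in P_N$ with $g\in\ZZ[t,u,z]$, such that for every $x\in X_i$ there is a tuple $z_x$ with $S_x=\{(t,u)\tq K\models\varphi_i(t,u,z_x)\}$. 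Invoking the definable Skolem functions of $(K,\cL)$ (Theorem~\ref{th:equiv-p-opt}), I replace $z_x$ by the value $\zeta_i(x)$ of a definable function $\zeta_i\colon X_i\to K^{n_i}$.

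It then remains to recognise the resulting description of $W$ as being of the required shape. Extend each $\zeta_i$ to a global definable function on $K^{m-1}$ (say by $0$ off $X_i$). For $g\in\ZZ[t,u,z]$, the substituted function $g(t,u,\zeta_i(x))$ is a polynomial in $(t,u)$ — the last $d+1$ coordinates of $K^{m+d}$ — whose coefficients are polynomials over $\ZZ$ in the definable functions $\zeta_i$, hence global definable functions of the first $m-1$ coordinates; so $\{(x,t,u)\tq g(t,u,\zeta_i(x))\in P_N\}$ is a $(d+1)$\--basic subset of $K^{m+d}$, and $B_i=\{(x,t,u)\tq K\models\varphi_i(t,u,\zeta_i(x))\}$ is a boolean combination of such. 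Moreover, $X_i$ is definable, hence by $p$\--optimality a boolean combination of basic subsets of $K^{m-1}$, and a basic subset of $K^{m-1}$ — viewed inside $K^{m+d}$ as a set independent of the last $d+1$ coordinates, i.e.\ cut out by a polynomial of degree $0$ in those variables — is $(d+1)$\--basic; so $X_i\times K^{1+d}$ is again a boolean combination of $(d+1)$\--basic subsets of $K^{m+d}$. Since $W=\bigcup_{i\leq q}\big(B_i\cap(X_i\times K^{1+d})\big)$, this finishes the proof.

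I expect the only delicate point to be the compactness step, namely making precise that semi-algebraicity of the fibres $S_x$ is \emph{uniform} in $x$ (and in the ambient model), which is where one genuinely needs Theorem~\ref{th:rel-p-min} to be available in every elementary extension; everything else is the substitution-of-Skolem-functions bookkeeping already carried out for Theorem~\ref{th:equiv-p-opt}, with ``polynomial in the last variable'' systematically replaced by ``polynomial in the last $d+1$ variables''.
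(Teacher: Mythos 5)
Your proposal is correct and follows essentially the same route as the paper: fibre over all but the last $1+d$ coordinates, apply Theorem~\ref{th:rel-p-min} together with Macintyre's theorem to each fibre (working in an arbitrary elementary extension, where strong $p$-minimality and the Extreme Value Property still hold), then recover uniformity by compactness and substitute the definable Skolem functions provided by Theorem~\ref{th:equiv-p-opt}. The only cosmetic differences are that the paper phrases the argument as an induction on $m$ (whose inductive hypothesis is never actually used) and handles the factor $X_i\times K^{1+d}$ via the indicator function of $X_i$ rather than via $p$-optimality.
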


\begin{proof}
If $m=1$ the conclusion follows from Theorem~\ref{th:rel-p-min} and
Macintyre's Theorem (see Footnote~\ref{fo:mac-rephrased}). 
Assume that it has been proved for $m\geq1$ and let $S$ be a definable
subset of $K^{m+1+d}$ which is the pre-image by the valuation of a
subset of $K^{m+1}\times|K|^d$. Let $S'$ be the
corresponding definable set over an elementary extension $K'$ of $K$.
For every $x'$ in $K'^m$ let $S'_{x'}$ denote the fiber of $S'$
over $x'$:
\begin{displaymath}
  S'_{x'}=\big\{(t',z')\in K'\times {K'}^d\tq (x',t',z')\in S'\big\} 
\end{displaymath}
This set $S'_{x'}$ is obviously the inverse image
in $K'\times K'^d$ by the valuation of a subset of $K'\times|K'|^d$. Note that
$K'$ is strongly $p$\--minimal and satisfies the Extreme Value Property,
because these two properties are preserved by elementary equivalence.
Thus Theorem~\ref{th:rel-p-min} applies in $K'$ and gives a tuple
$a'_{x'}$ of coefficients of a description of $S'_{x'}$ as a boolean
combination of $(d+1)$\--basic subsets of $K'^{d+1}$. The
model-theoretic Compactness Theorem then gives definable subsets
$A_1,\dots,A_q$ partitioning $K^m$, and for every $i\leq q$ an $\cL$--formula
$\varphi_i(\alpha,\tau,\zeta)$ with $n_i+1+d$ free variables which is a boolean
combination of formulas of the form $f(\alpha,\tau,\zeta)\in P_N$ with $f\in\ZZ[\alpha,\tau,\zeta]$,
such that for every $x$ in $A_i$ there is a list of coefficients $a_x$ such
that
\begin{displaymath}
  S_x=\big\{ (t,z)\in K\times K^d\tq K\models\varphi(a_x,t,z) \big\}.
\end{displaymath}
In other words, for every $x$ in $A_i$
\begin{displaymath}
  K\models\exists a\;\forall t,z\ \big((x,t,z)\in S \leftrightarrow \varphi_i(a,t,z)\big).
\end{displaymath}
By Theorem~\ref{th:equiv-p-opt}, $K$ has definable Skolem functions, hence for
each $i\leq q$ there is a definable function $\sigma_i:A_i\to K^{n_i}$ such that
for every $x\in A_i$
\begin{displaymath}
  K\models\forall t,z\ \big[(x,t,z)\in S \leftrightarrow \varphi_i\big(\sigma_i(x),t,z\big)\big].
\end{displaymath}
Let $B_i=\{(x,t,z)\in K^{m+1+d}\tq K\models\varphi_i(\sigma_i(x),t,z)\}$. By construction,
this is a boolean combination of $(d+1)$\--basic subsets of
$K^{m+1+d}$. On the other hand, $A_i\times K^{d+1}$ is obviously
a $(d+1)$\--basic subset of $K^{m+1+d}$. Indeed, if $c_i(x)$ denotes
the indicator function of $A_i$, then $h_i(x,t,z)=c_i(x)-1$ is
$(d+1)$\--basic and we have
\begin{displaymath}
  A_i\times K^{d+1} =\big\{(x,t,z)\in K^{m+1+d}\tq  h_i(x,t,z)=0\big\}
\end{displaymath}
which is a $(d+1)$\--basic set by Remark~\ref{re:bool-basic}. The
conclusion follows, since $S$ is the union of the sets  $B_i\cap(A_i\times
K^{d+1})$.
\end{proof}

\section{Cell preparation}
\label{se:cell-prep}

The main result of this section is the Cell Preparation
Theorem~\ref{th:cell-prep} for definable functions. We derive from it
our last main result, Theorem~\ref{th:isom-dim}, which classifies up to
definable bijections the definable sets over any $p$\--optimal field 
satisfying the Extreme Value Property.

\begin{lemma}[Denef]\label{le:norm}
  Assume that $K$ is $p$\--optimal and satisfies the Extreme Value
  Property. Then for every definable function $f:X\subseteq K^m\to K$ there is
  an integer $e\geq 1$ and a partition $\cA$ of $X$ in definable sets $A$
  such that for every $x$ in $A$
  \begin{displaymath}
    \big|f(x)\big|^e = \left|\frac{p_A(x)}{q_A(x)}\right|
  \end{displaymath}
  with $p_A$, $q_A$ a pair of basic functions such that $q_A(x)\neq0$ for
  every $x$ in $A$.
\end{lemma}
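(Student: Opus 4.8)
The plan is to reduce to the case of a semi-algebraic $f$ and then apply Denef's semi-algebraic version of the statement, which is obtained from cell decomposition together with a Newton polygon computation; the general case follows by running that argument uniformly over the first $m-1$ variables. Write $\hat x=(x_1,\dots,x_{m-1})$, so $x=(\hat x,x_m)$.

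First I would fix $\hat x$ and work with the fiber function $x_m\mapsto f(\hat x,x_m)$, a definable function on a definable subset $X_{\hat x}$ of $K$. Since $K$ is strongly $p$\--minimal (Theorem~\ref{th:equiv-p-opt}) and satisfies the Extreme Value Property, Lemma~\ref{le:SI} lets me replace it, up to norm, by a semi-algebraic function $\tilde f_{\hat x}\colon X_{\hat x}\to K$. Away from its zero set (where one may take numerator $0$ and denominator $1$, for any $e$) the graph of $\tilde f_{\hat x}$ is a semi-algebraic subset of $K^2$ with singleton fibers. Pick a finite family $\cF$ of polynomials whose booleans cut out this graph and apply Denef's cell decomposition (Corollary~\ref{co:cell-dec-PN}): the graph is partitioned into cells, each of which is necessarily of type $0$, since a type $1$ cell has infinite fibers over its base whereas a graph does not. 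Hence over a finite partition of $X_{\hat x}$ the function $\tilde f_{\hat x}$ coincides with the center of such a cell, which lies in $\coalg(\cF)$ and is therefore algebraic over $K(x_m)$: on each piece there is $P(x_m,T)=\sum_{k\le d}a_k(x_m)T^k$ with $a_k\in K[x_m]$, $a_d\ne0$, and $P(x_m,\tilde f_{\hat x}(x_m))=0$.

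Next I would split each piece further according to which coefficients $a_k$ vanish, the combinatorial type of the Newton polygon of $P$ in $T$, and which of its finitely many vertices-to-vertices segments accounts for the root $\tilde f_{\hat x}(x_m)$ — this last choice is meaningful because $v(\tilde f_{\hat x}(x_m))$, being the valuation of a root of $P(x_m,\cdot)$, equals one of the quotients $\bigl(v(a_j(x_m))-v(a_k(x_m))\bigr)/(k-j)$ read off from that Newton polygon, for consecutive vertices $j<k$. On the resulting pieces $a_j,a_k$ are nowhere zero and $|\tilde f_{\hat x}(x_m)|^{\,k-j}=|a_j(x_m)/a_k(x_m)|$; taking $e$ to be a common multiple of the integers $k-j$ that occur and raising, one gets $|f|^{e}=|p/q|$ with $p,q$ polynomial in $x_m$ and $q$ nowhere zero on the piece. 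Finally, to make everything uniform in $\hat x$ I would argue exactly as in the proofs of Theorem~\ref{th:equiv-p-opt} and Corollary~\ref{co:pd-opt}: the model-theoretic Compactness Theorem yields one integer $e$, a finite definable partition of $X$, and on each piece an $\cL$\--formula whose parameters are the coefficients of the two polynomials (and which records that the denominator is nonzero); since $K$ has definable Skolem functions (Theorem~\ref{th:equiv-p-opt}) these parameters may be chosen to be definable functions of $\hat x$, so that $p_A$ and $q_A$ are polynomials in $x_m$ with definable coefficients in $\hat x$, i.e.\ basic functions.

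I expect the Newton polygon bookkeeping to be the step requiring the most care: cutting $X$ into enough definable pieces so that on each one the valuation of the relevant root of $P$ is given by a single quotient $\bigl(v(a_j)-v(a_k)\bigr)/(k-j)$ of valuations of nonvanishing coefficients. The reduction to the semi-algebraic case through Lemma~\ref{le:SI} and the compactness-plus-Skolem uniformization are by now routine in the present paper.
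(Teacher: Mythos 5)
Your route is genuinely different from the paper's and, in outline, workable. The paper's proof is a two-liner: by Corollary~\ref{co:pd-opt} the set $\{(x,t)\tq |t|=|f(x)|\}$ is a boolean combination of $2$\--basic sets (polynomial in \emph{both} $x_m$ and $t$), and then the proof of Denef's Theorem~6.3 runs verbatim, consuming the variable $t$ and leaving $1$\--basic data in $x_m$; the remark following the proof stresses that this $2$\--basicness is exactly what Corollary~\ref{co:pd-opt} is needed for. You instead inline the content of Theorem~\ref{th:rel-p-min}/Corollary~\ref{co:pd-opt}: apply Lemma~\ref{le:SI} fibrewise to get a genuinely semi-algebraic $\tilde f_{\hat x}$ in one variable, extract honest polynomials $p,q\in K[x_m]$ there, and only afterwards uniformize by compactness and Skolem functions so the coefficients become definable in $\hat x$. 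Both routes rest on the same two pillars (Lemma~\ref{le:SI} plus compactness-and-Skolem), so nothing is gained or lost in terms of hypotheses; your version makes the role of the Extreme Value Property more transparent, at the cost of re-deriving the one-variable case of Denef's Theorem~6.3 by hand instead of citing it.

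The one step you assert but do not prove is the pivot of your whole Newton-polygon argument: that the center $c_H\in\coalg(\cF)$ is a root of a \emph{nonzero} polynomial $P(x_m,T)\in K[x_m][T]$, uniformly on a definable piece. This is not immediate from the definition of $\coalg$. An element of $\coalg(g)$ is a $K$\--linear combination $\sum_{i,j}\alpha_{i,j}b_{i,j}(x_m)$ of \emph{coordinates} of the roots of $g$ with respect to a linearly independent family $(\xi_j)$ in $\overline K$, mixing coordinates of \emph{different} roots; such a quantity is not itself a root of $g$, and its norm is not controlled by the norms of the roots (cancellation can occur). To justify the claim one must, e.g., write each coordinate functional as $\mathrm{Tr}_{L/K}(\eta_j\,\cdot)$, express $b_{i,j}(x_m)$ as an $\overline K$\--linear combination of Galois conjugates of roots of $g(x_m,\cdot)$, form the product of $T-(\cdot)$ over all such combinations (a symmetric, Galois-stable expression whose coefficients land in $K[x_m]$ after clearing the leading coefficient), and then handle sums across different $g\in\cF$ by resultants, checking on each piece that the leading coefficients do not vanish so the resulting polynomial in $T$ is nonzero. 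All of this is true and standard, but it is a real argument, and without it the Newton-polygon step has nothing to bite on. If you want to avoid it entirely, note that once you have the semi-algebraic one-variable function $\tilde f_{\hat x}$ you may simply invoke Denef's Theorem~6.3 over the $p$\--adically closed field $K$ for that function, and keep only your compactness-plus-Skolem uniformization.
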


\begin{proof}
By Corollary~\ref{co:pd-opt}, the set $S=\{(x,t)\in K^m\times K\tq |t|=|f(x)|\}$ is a boolean
combination of $2$\--basic subsets of $K^{m+1}$. The proof of Denef's
Theorem~6.3 in \cite{dene-1984} then applies word-for-word, with basic
functions instead of polynomial functions. It gives a partition of $X$
in finitely many definable pieces $A$, on each of which $|f|^e=|p_A/q_A|$
for some $1$\--basic functions such that $q_A(x)\neq0$ for every $x$ in
$A$. 
\end{proof}

Note that, in the above proof, if $S$ is a boolean combination of
$(d+1)$\--basic sets then Denef's proof of Theorem 6.3 also goes
through and the resulting functions $p_A$, $q_A$ are $d$\--basic. In
particular, it is not sufficient to know that $S$ is a boolean
combination of basic sets (as it would follow directly from
$p$\--optimality), because Denef's argument then would yield functions
$p_A$, $q_A$ which are only $0$\--basic, that is just definable,
without providing any gain. So, contrary to what happened in
Section~\ref{se:cell-dec} with the Cell Decomposition, the
generalization of Denef's Cell Preparation to $p$\--optimal fields is
not at all straightforward: all the results of the previous section
leading to Corollary~\ref{co:pd-opt} seem to be mandatory here, in
order to ensure that $S$ is a boolean combination of $2$\--basic sets.

\begin{remark}\label{re:norm-n}
  Given an integer $n_0\geq1$, the set $1+\pi^{n_0}R$ is a definable
  subgroup of $R^\times$ with finite index. Thus in Lemma~\ref{le:norm} we
  can always assume, refining if necessary the partition of $X$
  (but keeping the same integer $e$ independently of $n_0$), that for
  every $x$ in $A$
  \begin{displaymath}
    f(x)^e=\cU_{n_0}(x)\frac{p_A(x)}{q_A(x)}.
  \end{displaymath}
\end{remark}

\begin{theorem}[Cell preparation]\label{th:cell-prep}
  Assume that $K$ is $p$\--optimal and satisfies the Extreme Value
  Property. Let $(\theta_i:A_i\subseteq K^{m+1}\to K)_{i\in I}$ be a finite family of
  definable functions. Then there exists an integer $e\geq1$ and, for
  every $n\in\NN^*$, a pair of integers $M$, $N$ and a finite family $\cH$
  of presented cells mod $Q_{N,M}^*$ such that $M>2v(e)$, $e$ divides
  $N$, $\cH$ refines $(A_i)_{i\in I}$, and for every $(x,t)\in H$, 
  \begin{equation}\label{eq:cell-prep}
    \theta_i(x,t) = \cU_{e,n}(x,t) h(x)
    \big[\lambda_H^{-1}\big(t-c_H(x)\big)\big]^\frac{\alpha}{e}
  \end{equation}
  for every $i\in I$ and every $H\in \cH$ contained in $A_i$, with
  $h:\widehat{H}\to K$ a continuous definable function and $\alpha\in\ZZ$ (both
  depending on $i$ and $H$)\footnote{If $H$ is of type $0$ then it is
  understood that $\alpha=0$ and we use the conventions that in this case
  $\lambda_H^{-1}=0$ and $0^0=1$.}\!. 
\end{theorem}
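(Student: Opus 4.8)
The plan is to follow Denef's preparation argument (Theorem~6.3 and the subsequent reasoning of \cite{dene-1984}, in the form of Theorem~2.8 of \cite{cluc-2004}), the new ingredients being Lemma~\ref{le:norm}, which lets us replace polynomials by basic functions, and Corollary~\ref{co:pd-opt}, which is exactly what makes Lemma~\ref{le:norm} available in our setting. Fix the given $n$. First I would apply Lemma~\ref{le:norm} together with Remark~\ref{re:norm-n} to each $\theta_i$, regarded as a definable function from $A_i\subseteq K^{m+1}$ to $K$: taking for $e$ a common multiple of the integers so produced (note that $e$ does not depend on the precision parameter), this gives a finite partition of $\bigcup_{i\in I}A_i$ into definable sets on each of which $\theta_i(x,t)^e=\cU_{n_0}(x,t)\,p_i(x,t)/q_i(x,t)$ with $p_i,q_i$ basic functions of $(x,t)$ (polynomial in the last variable $t$), $q_i$ nowhere zero on the piece, and $n_0\geq1$ as large as we please.

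Next, since $K$ is $p$\--optimal each such piece, and each $A_i$, is a boolean combination of basic subsets of $K^{m+1}$; I would let $\cF$ be the finite family consisting of the $p_i$, the $q_i$ and of basic functions witnessing these boolean combinations, and apply Theorem~\ref{th:cell-dec} to $\cF$ with a large integer $n_1$. Arguing as in the proof of Corollary~\ref{co:cell-dec-PN}, one subdivides the resulting cells mod $K^*$ into cells mod $P_{N'}^*$ so that every piece of the previous step, and every $A_i$, is a union of these subcells; refining once more we may take the cells mod $Q_{N,M}^*$ for any fixed $N$ divisible by $e$ and by $N'$ and any $M>2v(e)$. On such a cell $H$ we get, for all $(x,t)\in H$ and all $i$ with $H\subseteq A_i$,
\begin{displaymath}
  \theta_i(x,t)^e=\cU_{n_2}(x,t)\,g_{i,H}(x)\,\big(t-c_H(x)\big)^{\beta_{i,H}},
\end{displaymath}
where $n_2=\min(n_0,n_1)$, $g_{i,H}=h_{p_i,H}/h_{q_i,H}$ is definable on $\widehat H$ (from Theorem~\ref{th:cell-dec}) and $\beta_{i,H}\in\ZZ$. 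On a type-$0$ cell, $\theta_i(x,t)=\theta_i(x,c_H(x))$ is already a function of $x$ and the statement is immediate with the stated conventions; on a type-$1$ cell $t\neq c_H(x)$, so $\theta_i$ vanishes on $H$ exactly over $\{g_{i,H}=0\}$, and splitting $\widehat H$ along this set for each $i$ (taking $h\equiv0$ there) reduces us to the case where every $g_{i,H}$ is nowhere zero.

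Then comes the extraction of $e$\--th roots. Since $e\mid N$ and $M-v(e)>v(e)$, Lemma~\ref{le:Hensel-DP} makes $z\mapsto z^e$ a bijection $Q_{N/e,M-v(e)}^*\to Q_{N,M}^*$, whose inverse $z\mapsto z^{1/e}$ is definable; write $z^{\beta/e}=(z^{1/e})^\beta$. On a type-$1$ cell mod $Q_{N,M}^*$ the element $w(x,t)=\lambda_H^{-1}\big(t-c_H(x)\big)$ lies in $Q_{N,M}^*$, so $\big(t-c_H(x)\big)^{\beta_{i,H}}=\lambda_H^{\beta_{i,H}}\big(w^{\beta_{i,H}/e}\big)^e$ and the definable function $\phi_i(x,t):=\theta_i(x,t)\,w(x,t)^{-\beta_{i,H}/e}$ satisfies $\phi_i(x,t)^e=\cU_{n_2}(x,t)\,g_{i,H}(x)\,\lambda_H^{\beta_{i,H}}$; in particular $|\phi_i(x,t)|$ depends only on $x$. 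Fixing a definable section $x\mapsto(x,s_H(x))$ of $H\to\widehat H$ (definable Skolem functions, Theorem~\ref{th:equiv-p-opt}) and setting $h_i(x):=\phi_i(x,s_H(x))$, one gets $\big(\phi_i(x,t)/h_i(x)\big)^e\in 1+\pi^{n_2}R$, whence, for $n_2$ large enough in terms of $n$ and $e$, Hensel's lemma (as in Lemma~\ref{le:Hensel-DP}) gives $\phi_i(x,t)/h_i(x)\in(1+\pi^nR)\UU_e$; a definable choice of the $\UU_e$\--part (finite fibres, Lemma~\ref{le:skolem1}) then yields $\phi_i(x,t)=\cU_{e,n}(x,t)\,h_i(x)$, i.e.
\begin{displaymath}
  \theta_i(x,t)=\cU_{e,n}(x,t)\,h_i(x)\,\big[\lambda_H^{-1}\big(t-c_H(x)\big)\big]^{\beta_{i,H}/e},
\end{displaymath}
which is (\ref{eq:cell-prep}) with $\alpha=\beta_{i,H}$. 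Finally I would make each $h_i$ continuous by splitting $\widehat H$ using piecewise continuity of definable functions (Remark~\ref{re:piece-cont}) — this only restricts the bases of the cells, so they stay presented cells — and collect all the cells into the family $\cH$.

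\textbf{Main obstacle.} The delicate point is the bookkeeping in the $e$\--th root step: one must arrange that the unit left over after dividing $\theta_i$ by $w^{\beta_{i,H}/e}$ and by $h_i$ has \emph{exactly} the form $\cU_{e,n}$ for the \emph{prescribed} $n$, which forces the auxiliary precisions $n_0$ and $n_1$ (hence $n_2$) to be taken large as functions of $n$ and $e$, while simultaneously keeping $\cH$ a common refinement of all the $A_i$ and of the cell decomposition of the second step, so that one single family $\cH$ works for every $\theta_i$ at once. Everything else is a faithful transcription of Denef's argument with basic functions in place of polynomials, the crucial enabling fact being that $S=\{(x,t):|t|=|\theta_i(x)|\}$ is a boolean combination of $2$\--basic sets (Corollary~\ref{co:pd-opt}), not merely of basic sets.
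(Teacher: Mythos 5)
Your proposal is correct and follows essentially the same route as the paper's proof: Lemma~\ref{le:norm} with Remark~\ref{re:norm-n} and a common exponent $e$, then Theorem~\ref{th:cell-dec} applied to the numerators, denominators and the basic functions witnessing the pieces, refinement to cells mod $Q_{N,M}^*$ on which the relevant cosets are constant, and finally extraction of $e$-th roots via Lemma~\ref{le:Hensel-DP}. The only (immaterial) difference is at the last step, where you produce the $x$-part $h$ by evaluating along a definable section of $H\to\widehat H$, whereas the paper observes directly that the residual factor $g(x)$ takes values in $P_e$ and writes $g=h^e$.
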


\begin{remark}
  Remark~\ref{re:piece-cont} applies to the above theorem as well,
  so the center and boundaries of every cell in $\cH$ can be chosen to be
  continuous.
\end{remark}

\begin{proof}
For each $i$ let $e_i$ be an integer, $\cA_i$ a partition of $A_i$ and
$\cF_i$ a family of basic functions, all given by Lemma~\ref{le:norm}
applied to $\theta_i$. By replacing each $e_i$ with a common
multiple\footnote{Note that we can require $e$ to be divisible as well
by any given integer $N_0$ if needed.} we can assume that all of them
are equal to some integer $e\geq1$. Given an integer $n\geq1$ from
  the theorem, we set $n_0=n+v(e)$ and we refine the partition
$\cA_i$ as in Remark~\ref{re:norm-n}. 

Let $\cA$ be a finite family of definable sets refining $\bigcup_{i\in
I}\cA_i$. We can assume that each of them is a boolean combination of
basic sets of the same power $N$, with $N$ a multiple of $e$. For
every $A$ in $\cA$, every $i\in I$ such that $A_i$ contains $A$ and
every $(x,t)$ in $A$ we have
\begin{equation}\label{eq:cell-dec-theta}
  \theta_i(x,t)^e=\cU_{n_0}(x,t)\frac{p_{i,A}(x,t)}{q_{i,A}(x,t)} 
\end{equation}
with $p_{i,A}$ and $q_{i,A}$ a pair of basic functions such that
$q_{i,A}(x,t)\neq0$ on $A$. 

For each $A$ in $\cA$ let $\cF_A$ be the set of basic functions
involved in a description of $A$ as a boolean combination of basic
sets of power $N$. Theorem~\ref{th:cell-dec} applies to the family
$\cF$ of all the basic functions $p_{i,A}$, $q_{i,A}$ and the
functions in $\cF_A$, for all $A$'s and $i$'s. It gives a
partition of $K^{m+1}$ into finitely many presented cells $B$ mod $K^*$
such that for every $f$ in $\cF$ and every $(x,t)$ in $B$
\begin{equation}\label{eq:cell-dec-f}
  f(x,t)=\cU_{M}(x,t)h_{f,B}(x)\big(t-c_B(x)\big)^{\beta_{f,B}}
\end{equation}
with $M=n_0+2v(N)$, $h_{f,B}:\widehat{B}\to K$ a definable function and
$\beta_{f,B}$ a positive integer. 

Partitioning $\widehat{B}$ if necessary, we can assume that the cosets
$h_{f,B}(x)Q_{N,M}^*$ are constant on $\widehat{B}$. Since 
  $\cU_M(x,t)\in 1+\pi^{M}R\subseteq Q_{N,M}^*$,  by (\ref{eq:cell-dec-f})
  $f(x,t)Q_{N,M}^*$ only depends on $(t-c_B(x))Q_{N,M}^*$.
Hence $B$ can be partitioned into cells $H$ mod $Q_{N,M}^*$ such that
$\widehat{H}=\widehat{B}$, $c_H=c_B$ and $f(x,t)Q_{N,M}^*$ is constant
on $H$, for every $f$ in $\cF$. {\it A fortiori}\/\footnote{Recall
  that $M=n_0+2v(M)>2v(M)$ hence $Q_{N,M}\subseteq P_N$ by Hensel's Lemma.}
$f(x,t)P_N^*$ is constant on $H$ for every $f$ in $\cF$, hence each
$A$ in $\cA$ either contains $H$ or is disjoint from $H$, for every
$A$ in $\cA$. So the family $\cH$ of all those cells $H$ that are
contained in $\bigcup\cA$ refines $\cA$, hence refines $\{A_i\tq i\in I\}$ as
well. 

For every cell $H$ in $\cH$ there is a unique cell $B$ as above
containing $H$. For every $i\in I$ such that $H$ is contained in $A_i$,
the unique $A$ in $\cA$ containing $B$ is also contained in $A_i$. By
(\ref{eq:cell-dec-f}) applied to $f=p_{i,A}$ and to $f=q_{i,A}$, and by
(\ref{eq:cell-dec-theta}) we have for every $(x,t)\in H$
\begin{equation}\label{eq:cell-dec-frac}
  \theta_i(x,t)^e=\cU_{n_0}(x,t)
  \frac{\cU_{M}(x,t)h_{p_{i,A},B}(x)\big(t-c_B(x)\big)^{\beta_{p_{i,A},B}}}
       {\cU_{M}(x,t)h_{q_{i,A},B}(x)\big(t-c_B(x)\big)^{\beta_{q_{i,A},B}}}
\end{equation}
The $\cU_{n_0}$ and $\cU_M$ factors simplify in a single $\cU_{n_0}$ since
$M\geq n_0$. By construction $c_H=c_B$ and $\widehat{H}=\widehat{B}$. So,
for every $(x,t)$ in $H$ we get 
\begin{equation}\label{eq:cell-dec-lamda}
  \theta_i(x,t)^e=\cU_{n_0}(x,t)g(x)\big[\lambda_H^{-1}\big(t-c_H(x)\big)\big]^{\alpha}
\end{equation}
with $g:\widehat{H}\to K$ a definable function and $\alpha\in\ZZ$ (both depending
on $i$ and $H$). Since $n_0>2v(e)$, $(\cU_{n_0}(x,t))^{\frac{1}{e}}$
is well defined and takes values in $1+\pi^{n_0-v(e)}$ by
Lemma~\ref{le:Hensel-DP}, that is $\cU_{n_0}=\cU_{n_0-v(e)}^e$. We
have $n_0-v(e)=n+v(e)\geq n$, hence {\it a fortiori}\/
$\cU_{n_0}=\cU_n^e$. So (\ref{eq:cell-dec-lamda}) becomes
\begin{equation}
  \theta_i(x,t)^e = \cU_n(x,t)^e g(x) 
  \Big(\big[\lambda_H^{-1}\big(t-c_B(x)\big)\big]^\frac{\alpha}{e}\Big)^e
\end{equation}
This implies that $g$ takes values in $P_e$, hence $g=h^e$ for some
definable function $h:\widehat{H}\to K$, from which (\ref{eq:cell-prep})
follows. 
\end{proof}

\begin{corollary}\label{co:isom}
  Suppose that $K$ is $p$\--optimal and satisfies the Extreme
  Value Property.
  Let $(\theta_i:A\subseteq K^m\to K)_{i\in I}$ be a finite family of definable
  functions with the same domain. Then for every integer $n\geq1$, there
  exists an integer $e$, a semi-algebraic set $\tilde A\subseteq K^m$ and a
  definable bijection $\varphi:\tilde A\to A$ such that for every $i\in I$ and
  every $x$ in $\tilde A$
  \begin{displaymath}
    \theta_i\circ\varphi(x)=\cU_{e,n}(x)\tilde\theta_i(x)
  \end{displaymath}
  with $\tilde\theta_i:\tilde A\subseteq K^m\to K$ semi-algebraic functions.
\end{corollary}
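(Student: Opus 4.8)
The plan is to induct on $m$, peeling off the last coordinate with the Cell Preparation Theorem~\ref{th:cell-prep} and disposing of the remaining ones by the inductive hypothesis. For $m=0$ the statement is trivial, so we take $m=1$ as the base of the induction: there $A\subseteq K$ is semi-algebraic by $p$\--minimality, so we may put $\tilde A=A$ and $\varphi=\mathrm{id}$. Indeed, Theorem~\ref{th:cell-prep} applied to $(\theta_i)_{i\in I}$ (with the ``$m$'' of that theorem equal to $0$) gives an integer $e$ and, for the prescribed $n$, a finite family $\cH$ of presented cells mod $Q_{N,M}^*$ partitioning $A$; each such cell has a one\--point base, hence is a semi-algebraic subset of $K$, and on it formula~(\ref{eq:cell-prep}) reads $\theta_i(t)=\cU_{e,n}(t)\,h\,[\lambda^{-1}(t-c)]^{\alpha/e}$ with $h,c,\lambda\in K$. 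Since $M>2v(e)$ and $e$ divides $N$, Lemma~\ref{le:Hensel-DP} shows that $s\mapsto s^{\alpha/e}$ is a semi-algebraic function on $Q_{N,M}^*$; hence $\tilde\theta_i:=h\,[\lambda^{-1}(t-c)]^{\alpha/e}$, defined piecewise over $\cH$, is semi-algebraic and satisfies $\theta_i=\cU_{e,n}\tilde\theta_i$ on $A$.

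For the step $m\to m+1$, apply Theorem~\ref{th:cell-prep} to $(\theta_i:A\to K)_{i\in I}$. It yields $e_0$ and, for the prescribed $n$, integers $M_0,N_0$ with $M_0>2v(e_0)$ and $e_0$ dividing $N_0$, and a finite family $\cH=\{H_1,\dots,H_r\}$ of presented cells mod $Q_{N_0,M_0}^*$ partitioning $A$, such that over each $H_j$ — with base $X_j=\widehat{H_j}\subseteq K^m$, center $c_j$, boundaries $\nu_j,\mu_j$ and element $\lambda_j\in K$ — one has $\theta_i(x,t)=\cU_{e_0,n}(x,t)\,h_{i,j}(x)\,[\lambda_j^{-1}(t-c_j(x))]^{\alpha_{i,j}/e_0}$. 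Fix $j$ with $\lambda_j\neq0$ (the type\--$0$ cells, where $H_j$ is the graph of $c_j$ over $X_j$, are handled the same way but more simply). The definable bijection $(x,t)\mapsto(x,\lambda_j^{-1}(t-c_j(x)))$ carries $H_j$ onto the cell $H_j^\flat=\{(x,s):x\in X_j,\ |\lambda_j^{-1}\nu_j(x)|\leq|s|\leq|\lambda_j^{-1}\mu_j(x)|,\ s\in Q_{N_0,M_0}^*\}$, over which $\theta_i$ becomes $\cU_{e_0,n}(x,\lambda_j s+c_j(x))\,h_{i,j}(x)\,s^{\alpha_{i,j}/e_0}$.

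Now apply the inductive hypothesis on $K^m$, with the same $n$, to the finite family of functions on $X_j$ consisting of all the $h_{i,j}$ ($i\in I$) together with $\lambda_j^{-1}\nu_j$ and $\lambda_j^{-1}\mu_j$ (dropping either of the last two when it is the constant $0$ or $\infty$). This produces an integer $e_j'$, a semi-algebraic $\tilde X_j\subseteq K^m$ and a definable bijection $\varphi_j:\tilde X_j\to X_j$ with $h_{i,j}\circ\varphi_j=\cU_{e_j',n}\tilde h_{i,j}$, $(\lambda_j^{-1}\nu_j)\circ\varphi_j=\cU_{e_j',n}\tilde a_j$ and $(\lambda_j^{-1}\mu_j)\circ\varphi_j=\cU_{e_j',n}\tilde b_j$ for semi-algebraic $\tilde h_{i,j},\tilde a_j,\tilde b_j$ on $\tilde X_j$. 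Because every $\cU_{e,n}$\--factor has norm $1$, the set $H_j^\sharp:=\{(u,s):u\in\tilde X_j,\ |\tilde a_j(u)|\leq|s|\leq|\tilde b_j(u)|,\ s\in Q_{N_0,M_0}^*\}$ is semi-algebraic and $\Phi_j:(u,s)\mapsto(\varphi_j(u),\lambda_j s+c_j(\varphi_j(u)))$ is a definable bijection $H_j^\sharp\to H_j$. Along $\Phi_j$ one gets $\theta_i\circ\Phi_j=\cU_{e_0,n}\cdot\cU_{e_j',n}\cdot\tilde h_{i,j}(u)\,s^{\alpha_{i,j}/e_0}$; since $1+\pi^nR$ is a group and $\UU_{e_0}\UU_{e_j'}\subseteq\UU_{E_j}$ with $E_j=\mathrm{lcm}(e_0,e_j')$, the product of the two unit factors is a single $\cU_{E_j,n}$, while $\tilde h_{i,j}(u)\,s^{\alpha_{i,j}/e_0}$ is semi-algebraic on $H_j^\sharp$ just as in the base case.

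Finally let $e$ be a common multiple of $e_0$ and of all the $e_j'$, so that every $\theta_i\circ\Phi_j$ equals $\cU_{e,n}$ times a semi-algebraic function. Since $A=\bigcup_j H_j$ (disjointly) and the $H_j^\sharp$ are finitely many semi-algebraic subsets of $K^{m+1}$, transport them by semi-algebraic bijections onto pairwise disjoint semi-algebraic subsets $H_j'\subseteq K^{m+1}$ of the same dimensions — possible because $K^{m+1}$ contains pairwise disjoint semi-algebraic sets of dimensions $\dim H_j^\sharp\leq m+1$ and semi-algebraic sets over a $p$\--adically closed field are classified up to semi-algebraic bijection by their dimension (\cite{cluc-2001}). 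Then $\tilde A:=\bigcup_j H_j'$ is semi-algebraic, the maps $\Phi_j$ assemble into a definable bijection $\varphi:\tilde A\to A$, and the functions $\tilde h_{i,j}(u)\,s^{\alpha_{i,j}/e_0}$, transported to the $H_j'$ and patched together, give semi-algebraic $\tilde\theta_i:\tilde A\to K$ with $\theta_i\circ\varphi=\cU_{e,n}\tilde\theta_i$. I expect the only genuinely delicate points to be checking that $s\mapsto s^{\alpha/e_0}$ is a well defined semi-algebraic function on $Q_{N_0,M_0}^*$ — which is exactly what the conditions $M_0>2v(e_0)$ and $e_0\mid N_0$ in Theorem~\ref{th:cell-prep} are for, via Lemma~\ref{le:Hensel-DP} — and the final gluing; the conceptual core, namely straightening a prepared cell fiberwise and then recursing on its base, is short once these are in place.
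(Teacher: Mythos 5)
Your proof is correct and follows essentially the same route as the paper: induct on $m$, reduce to a single prepared cell via Theorem~\ref{th:cell-prep}, straighten the fiber by $(x,t)\mapsto(x,\lambda^{-1}(t-c(x)))$, apply the inductive hypothesis to the $h_i$'s and the boundaries, and merge the unit factors into a single $\cU_{e,n}$ with $e$ a common multiple. The only difference is that you make explicit the final gluing of the pieces into one semi-algebraic $\tilde A$ (via the classification of \cite{cluc-2001}), a step the paper's proof passes over silently with ``it suffices to prove the result for the restrictions''; this is a legitimate and slightly more careful treatment, not a divergence.
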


\begin{proof}
The proof goes by induction on $m$. Let us assume that it has been
proved for some $m\geq0$ (it is trivial for $m=0$) and that a finite
family $(\theta_i)_{i\in I}$ of definable functions is given with domain $A\subseteq
K^{m+1}$. If $A$ is a disjoint union of sets $B$, it suffices to prove
the result for the restrictions of the $\theta_i$'s to $B$. So, for any
given integer $n\geq1$, by Theorem~\ref{th:cell-prep} we are reduced to
the case when $A$ is a presented cell mod $Q_{N,M}^*$ for some $N$,
$M$ such that for some $e_0\geq1$ dividing $N$, $M>2v(e_0)$ and for every
$i\in I$ and every $(x,t)$ in $A$
\begin{equation}\label{eq:isom-cell}
  \theta_i(x,t) = \cU_{e_0,n}(x,t)h_i(x)
  \big[\lambda_A^{-1}\big(t-c_A(x)\big)\big]^\frac{\alpha_i}{e_0}
\end{equation}
with $h_i:\widehat{A}\to K$ a definable function and $\alpha_i\in\ZZ$.

Let $e_1\geq1$ be an integer, $Y\subseteq K^m$ a semi-algebraic set,
$\psi:Y\to\widehat{A}$ a definable bijection, $\tilde f:Y\to K$ a
semi-algebraic function for each $f$ in $\cF$, all of this given by
the induction hypothesis applied to $\cF=\{\mu_A,\nu_A\}\cup \{h_i\}_{i\in I}$.
Let $\tilde A$ be the set of $(y,s)\in Y\times K$ such that
\begin{displaymath}
  |\tilde\nu_A(y)|\leq|s|\leq|\tilde\mu_A(x)| \mbox{ \ and \ } s\in\lambda_AQ_{N,M}^*. 
\end{displaymath}
Then $\varphi:(y,s)\mapsto(\psi(y),c_A(\psi(y))+s)$ defines a bijection from $\tilde
A$ to $A$. For every $i\in I$ and every $(y,s)\in\tilde A$ we have
\begin{displaymath}
  \theta_i\circ\varphi(y,s)=\cU_{e_0,n}(y,s)\cU_{e_1,n}(y,s)\tilde
  h_i(y)(\lambda_A^{-1}s)^\frac{\alpha_i}{e_0} 
\end{displaymath}
The first two factors can be replaced by $\cU_{e,n}$ with $e$ any
common multiple of $e_0$ and $e_1$. Since $\tilde\theta:(y,s)\mapsto \tilde
h_i(y)(\lambda_A^{-1}s)^\frac{\alpha_i}{e_0}$ is a semi-algebraic function on
$\tilde A$ the conclusion follows.
\end{proof}

Theorem~\ref{th:cell-prep} and Corollary~\ref{co:isom} are exactly
analogous to Theorems~2.8 and 3.1 in \cite{cluc-2004}, except that we
obtain a slightly more precise equality of functions mod
$(1+\pi^nR).\UU_e$ instead of equality of their norm (which is the same
as equality of functions mod $R^\times$). Thus all the applications that
are derived from these theorems in \cite{cluc-2004} for the classical
analytic structure remain valid in every $p$\--optimal field which
satisfies the Extreme Value Property, with exactly the same proofs as
in \cite{cluc-2004}. As already mentioned in the introduction
some of these applications, which concern the constructibility of
functions defined by parametric integrals and gives the rationality of
Poincar\'e series attached to definable functions, have already been
generalised to strongly $p$\--minimal fields in \cite{cubi-leen-2015}.
The other main application of Theorems~2.8 and 3.1 in
\cite{cluc-2004} is the classification of subanalytic sets up to
subanalytic bijections (Theorem~3.2 in \cite{cluc-2004}). It is not
known at the moment if it holds true for strongly $p$\--minimal fields.

\begin{theorem}\label{th:isom-dim}
  Assume that $K$ is $p$\--optimal and satisfies the Extreme
  Value Property.
  Then there exists a definable bijection between two infinite 
  definable sets $A\subseteq K^m$ and $B\subseteq K^n$ if and only if they have the
  same dimension.
\end{theorem}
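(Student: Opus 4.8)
The plan is to reduce the theorem to the classification of definable \emph{functions} up to $\cU_{e,n}$-equivalence given by Corollary~\ref{co:isom}, following the strategy of Theorem~3.2 in \cite{cluc-2004}. First I would note that the ``only if'' direction is trivial: a definable bijection preserves dimension (this is a standard consequence of strong $p$-minimality and cell decomposition, since dimension of a definable set equals the maximum $d$ such that some coordinate projection onto $K^d$ has nonempty interior, and definable bijections cannot raise this invariant in either direction). For the ``if'' direction, since dimension is clearly additive under disjoint unions and a definable set of dimension $d$ decomposes by Theorem~\ref{th:cell-dec-PN} into finitely many cells, it suffices to build a definable bijection between any definable $A\subseteq K^m$ of dimension $d$ and $K^d$ (or a fixed semi-algebraic representative of dimension $d$). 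By transitivity of definable bijection it is then enough to handle a single presented cell.

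The key step is an induction on the ambient dimension $m$ showing that every definable $A\subseteq K^{m}$ is in definable bijection with a \emph{semi-algebraic} set of the same dimension. For the inductive step, write $A\subseteq K^{m+1}$, decompose it into presented cells mod $Q_{N,M}^*$ via Theorem~\ref{th:cell-prep} (applied, say, to the center and boundary functions, or trivially if no functions need preparing), and on each cell $H=(c_H,\nu_H,\mu_H,\lambda_H)$ apply Corollary~\ref{co:isom} to the finite family $\{\nu_H,\mu_H\}$ of boundary functions together with $c_H$. This yields a semi-algebraic base $\tilde Y$, a definable bijection $\widehat{H}\to \tilde Y$ inducing one from $H$ to a cell $\tilde H$ over $\tilde Y$ whose boundaries are semi-algebraic (the $\cU_{e,n}$ factors are harmless: $\cU_{e,n}(y)s$ and $s$ have the same norm and lie in the same $Q_{N,M}^*$-coset after absorbing, so the defining conditions of the cell are unaffected). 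Now $\tilde H$ is a semi-algebraic cell, so by Denef's classification of semi-algebraic sets over $p$-adically closed fields up to semi-algebraic bijection (the case $\cL=$ rings, which is classical and available via the references to \cite{cluc-2001}/\cite{dene-1984}) it is in semi-algebraic bijection with $K^{\dim \tilde H}$. Since $\dim$ is preserved throughout, composing gives the desired bijection.

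The main obstacle I anticipate is bookkeeping the dimension through the cell decomposition and the passage $H\rightsquigarrow\tilde H$: one must check that a presented cell of type $1$ with a nontrivial boundary condition has the same dimension whether the boundaries are the original definable functions or their semi-algebraic replacements, and that a type-$0$ cell $\{(x,c_H(x))\tq x\in\widehat H\}$ has dimension $\dim\widehat H$, so the induction hypothesis applies cleanly to its base. This is where one needs that dimension of a cell over a base $X$ equals $\dim X$ for type $0$ and $\dim X + 1$ for type $1$ with $\nu_H\neq 0$ or $\mu_H\neq\infty$ (and $\dim X+1$ as well otherwise) — a fact that follows from fibrewise inspection and is already implicit in the cell decomposition machinery. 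Once this additivity is in hand the argument is a routine induction, and the only genuinely new input beyond Corollary~\ref{co:isom} is the purely semi-algebraic classification, which is not proved here but is a known theorem of Denef over $\QQ_p$ extended to $p$-adically closed fields.
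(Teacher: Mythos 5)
Your proposal is correct and follows essentially the same route as the paper: replace each definable set by a semi-algebraic one of the same dimension and then invoke the classification of infinite semi-algebraic sets up to semi-algebraic bijection by their dimension. The only real difference is that the inductive cell-preparation argument you sketch is already packaged in Corollary~\ref{co:isom} (applied to a trivial family of functions), so the paper simply cites that corollary together with the dimension invariance of definable bijections from Corollary~6.4 of \cite{hask-macp-1997}; note also that the semi-algebraic classification you need is Cluckers' theorem \cite{cluc-2001}, not a result of Denef.
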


\begin{proof}
If there is a definable bijection (an ``isomorphism'') between $A$ and
$B$ they have the same dimension by Corollary~6.4 in
\cite{hask-macp-1997}. Conversely, if $A$ and $B$ have the same
dimension $d$, then by Corollary~\ref{co:isom} they are isomorphic to
infinite semi-algebraic sets $\tilde A$ and $\tilde B$ respectively,
both of which have dimension $d$, by Corollary~6.4 in
\cite{hask-macp-1997} again. Then $\tilde A$ and $\tilde B$ are
semi-algebraically isomorphic by the main result of \cite{cluc-2001},
hence $A$ and $B$ are isomorphic. 
\end{proof}

\bibliographystyle{alpha}

\begin{thebibliography}{}

\end{thebibliography}


\begin{thebibliography}{CKDL15}

\bibitem[CKDL15]{darn-coba-leen-2015-tmp}
Pablo Cubides-Kovacsics, Luck Darni{\`e}re, and Eva Leenknegt.
\newblock Topological cell decomposition and dimension theory in $p$-minimal
  fields.
\newblock preprint, 2015.

\bibitem[CKL]{cubi-leen-2015}
Pablo Cubides-Kovacsics and Eva Leenknegt.
\newblock Integration and cell decomposition in {$P$}-minimal structures.
\newblock To appear in the Journal of Symbolic Logic.

\bibitem[CL12]{cluc-leen-2012}
Raf Cluckers and Eva Leenknegt.
\newblock A version of {$p$}-adic minimality.
\newblock {\em J. Symbolic Logic}, 77(2):621--630, 2012.

\bibitem[Clu01]{cluc-2001}
Raf Cluckers.
\newblock Classification of semi-algebraic {$p$}-adic sets up to semi-algebraic
  bijection.
\newblock {\em J. Reine Angew. Math.}, 540:105--114, 2001.

\bibitem[Clu03]{cluc-2003}
Raf Cluckers.
\newblock Presburger sets and {$P$}-minimal fields.
\newblock {\em J. Symbolic Logic}, 68(1):153--162, 2003.

\bibitem[Clu04]{cluc-2004}
Raf Cluckers.
\newblock Analytic {$p$}-adic cell decomposition and integrals.
\newblock {\em Trans. Amer. Math. Soc.}, 356(4):1489--1499, 2004.

\bibitem[Den84]{dene-1984}
Jan Denef.
\newblock The rationality of the {P}oincar\'e series associated to the
  {$p$}-adic points on a variety.
\newblock {\em Invent. Math.}, 77(1):1--23, 1984.

\bibitem[Den86]{dene-1986}
Jan Denef.
\newblock {$p$}-adic semi-algebraic sets and cell decomposition.
\newblock {\em J. Reine Angew. Math.}, 369:154--166, 1986.

\bibitem[DvdD88]{dene-drie-1988}
Jan Denef and Lou van~den Dries.
\newblock {$p$}-adic and real subanalytic sets.
\newblock {\em Ann. of Math. (2)}, 128(1):79--138, 1988.

\bibitem[HM97]{hask-macp-1997}
Deirdre Haskell and Dugald Macpherson.
\newblock A version of o-minimality for the {$p$}-adics.
\newblock {\em J. Symbolic Logic}, 62(4):1075--1092, 1997.

\bibitem[Mac76]{maci-1976}
Angus Macintyre.
\newblock On definable subsets of {$p$}-adic fields.
\newblock {\em J. Symbolic Logic}, 41(3):605--610, 1976.

\bibitem[Mou00]{mour-1999}
Marie-H{\'e}l{\`e}ne Mourgues.
\newblock Corps {$p$}-minimaux avec fonctions de skolem d\'efinissables.
\newblock Pr\'epublications de l'\'equipe de logique de paris~7, S\'eminaire de
  structures alg\'ebriques ordonn\'ees, 1999-2000.

\bibitem[Mou09]{mour-2009}
Marie-H{\'e}l{\`e}ne Mourgues.
\newblock Cell decomposition for {$P$}-minimal fields.
\newblock {\em MLQ Math. Log. Q.}, 55(5):487--492, 2009.

\bibitem[PR84]{pres-roqu-1984}
A.~Prestel and P.~Roquette.
\newblock {\em Formally $p$-adic fields}, volume 1050 of {\em Lecture Notes in
  Math.}
\newblock Springer-Verlag, 1984.

\bibitem[vdD84]{drie-1984}
Lou van~den Dries.
\newblock Algebraic theories with definable {S}kolem functions.
\newblock {\em J. Symbolic Logic}, 49(2):625--629, 1984.

\end{thebibliography}

\end{document}